%Stability in Fourier analysis
%Last updated 24 March 2020 by Misha

\documentclass[12pt]{article}

%123456789012345678901234567890123456789012345678901234567890123456789

\usepackage{graphicx}
\usepackage{amsmath,amsthm,amssymb,color}
\usepackage[noadjust,sort]{cite}
 \usepackage[normalem]{ulem}

\setlength{\textwidth}{16cm}
\setlength{\textheight}{22cm}
\setlength{\topmargin}{0pt}
\setlength{\headsep}{0pt}
\setlength{\headheight}{0pt}
\setlength{\oddsidemargin}{0pt}
\setlength{\evensidemargin}{0pt}
\setlength{\parskip}{0.5ex}
\normalsize

\theoremstyle{plain}
\newtheorem{Theorem}{Theorem}[section] %
\newtheorem{Lemma}[Theorem]{Lemma}

\newtheorem{Corollary}[Theorem]{Corollary}

\theoremstyle{definition}
\newtheorem{Remark}[Theorem]{Remark}

\def\calF{\mathcal{F}}
\def\calC{\mathcal{C}}
\def\calL{\mathcal{L}}
\def\calH{\mathcal{H}}
\def\calW{\mathcal{W}}

\theoremstyle{definition}

\newtheorem{Problem}{Problem}[section]

%\addto\captionsrussian{\def\refname{References}}
% èìÿ îêðóæåíèÿ
{\par\noindent{\it Proof of}} % êîìàíäû äëÿ \begin
{\hfill$\vspace{5mm}\scriptstyle\blacksquare$} % êîìàíäû äëÿ \end

\numberwithin{equation}{section} %×òîáû íóìåðàöèÿ â êàæäîé ñåêöèè áûëà íåçàâèñèìîé
\numberwithin{figure}{section} %×òîáû íóìåðàöèÿ â êàæäîé ñåêöèè áûëà íåçàâèñèìîé
\numberwithin{table}{section} %×òîáû íóìåðàöèÿ â êàæäîé ñåêöèè áûëà íåçàâèñèìîé

% Various
\def\supp{\operatorname{supp}}

\def\Reals{{\mathbb{R}}}
\def\Complexes{{\mathbb{C}}}

\def\Naturals{{\mathbb{N}}}

\def\st{\,:\,}

\def\dfrac#1#2{\lower0.15ex\hbox{\large$\textstyle\frac{#1}{#2}$}}

\begin{document}

\setcounter{page}{1}

\markboth{M.I. Isaev, R.G. Novikov}{H\"older-logarithmic stability in inverse problems for the reconstruction
from the Fourier transform in the ball}

\title{
Stability  estimates for reconstruction
 from the Fourier transform  on the ball\thanks{The first author's research is  supported by    the Australian Research  Council  Discovery Early Career Researcher Award DE200101045. 
}
}
\date{}
\author{ 
Mikhail Isaev\\
\small School of Mathematics\\[-0.8ex]
\small Monash University\\[-0.8ex]
\small Clayton, VIC, Australia\\
%\small Moscow Institute of Physics and Technology\\[-0.9ex]
%\small Dolgoprudny, Moscow Region, Russia\\[-0.3ex]
\small\texttt{mikhail.isaev@monash.edu}
\and
Roman G. Novikov\\
\small CMAP, CNRS, Ecole Polytechnique\\[-0.8ex]
\small Institut Polytechnique de Paris\\[-0.8ex]
\small Palaiseau, France\\
\small IEPT RAS, Moscow, Russia\\
\small\texttt{novikov@cmap.polytechnique.fr}
}

\maketitle
%{\bf Abstract}
%\begin{abstract}
%Many important inverse problems are exponentially ill-posed in general, which constitutes a severe difficulty for numerical treatments. However, a stable reconstruction of the unknown parameter might still be possible in some cases when the parameter is well-behaved.  Countless results in the literature confirm improved stability under various additional a-priori assumptions.  In fact, the behaviour of the stability bounds can change dramatically from the logarithmic type to the H\"{o}lder type or even, under some strong assumptions, to the Lipchitz type. In this work, we illustrate such transitions with an example from the classical Fourier analysis.
%\end{abstract}

\begin{abstract}
	We prove  H\"{o}lder-logarithmic stability estimates
	for the problem of finding an  integrable function $v$ on $\Reals^d$
	with a super-exponential decay at infinity  
	 from its Fourier transform  $\calF v$
	given on the ball $B_r$.  These estimates arise from a H\"{o}lder-stable extrapolation of   $\calF v$ from $B_r$ to a larger ball. 
	We also present instability examples showing an optimality of our  results. 
	
%	
%	 for the problem of finding a sufficiently regular compactly supported function $v$ on $\Reals^d$ from its Fourier transform  $\calF v$
%	given on $[-r,r]^d$. 
%	This estimate  relies on a H\"{o}lder stable continuation of $\calF v$
%		from  $[-r,r]^d$  to a larger domain. The related reconstruction procedures are based on truncated series of Chebyshev polynomials. 
%		 We also give an explicit example showing optimality of our stability estimates.
%		
\noindent \\
{\bf Keywords:}    ill-posed inverse problems, H\"{o}lder-logarithmic stability,  exponential instability, Chebyshev extrapolation 
\\\noindent 
\textbf{AMS subject classification:} 42A38, 35R30, 49K40
\end{abstract}

\section{Introduction}

We consider the classical Fourier transform $\mathcal{F}$  defined by 
\begin{equation*}
 	\mathcal{F} v (\xi) := 
 	\dfrac{1}{(2\pi)^d}\int\limits_{\mathbb{R}^d} e^{i\xi x} v(x) dx, \ \ \ \xi\in \mathbb{R}^d,
 \end{equation*}
where $v$ is a test function on $\Reals^d$ and  $d\geq 1$. 
%The analysis of this transform is one of the 
%most developed domains of modern mathematics. 
% In particular, it is well known that if  $v$ 
%is compactly supported then $\calF v $ is analytic on $\Reals^d$. Thus, $\calF v$ and, consequently, $v$ are uniquely determined  
%by  the values of $\calF v $ within any open non-empty domain.  However, in the case of noisy  data  
%such reconstruction is believed to be very hard. 
%In the present paper we investigate how much the stability improves with respect to  the size of the domain  where $\calF v$ is known and with respect to the regularity of $v$.
%The inverse problem  
%For a  sufficiently regular compactly supported 
%function $v$ on $\mathbb{R}^d$, 
Let  
\[
	B_r := \left\{\xi\in \mathbb{R}^d :  |\xi| < r\right\}, \qquad
 	\text{where $r>0$.}
\]
Assume that $v$ is integrable 
and,  for some  $N,\sigma>0$ and $\nu\geq 1$, we have that 
%\begin{equation}\label{eq:ass}
   \begin{equation} \label{eq:ass}
   Q_v(\lambda) :=   \dfrac{1}{(2\pi)^d}  \int\limits_{\Reals^d} e^{\lambda|x|}|v(x)|dx \leq  N\exp\left(\sigma \lambda^{\nu}\right), 
   \qquad 
   \text{for all $\lambda\geq 0$.}
  \end{equation}   
%
% and that, for some $\eta>0$,
%\begin{equation}\label{ass:eq}
%	Q(\lambda) :=   \dfrac{1}{(2\pi)^d}  \int\limits_{\Reals^d} e^{\lambda|x|}|v(x)|dx =\exp\left(   O\left(\lambda^{\eta}\right)\right)
%	\qquad \text{as $\lambda  \rightarrow +\infty$.}
%\end{equation}
\begin{Remark}\label{Rem1}
In particular, for the case of $\nu=1$, the class of functions satisfying \eqref{eq:ass}
 includes all functions  $v$ with $\supp v \subset  B_\sigma$ and $  \dfrac{1}{(2\pi)^d} \|v\|_{\calL^1(\Reals^d)} \leq N$.
%\in  \mathbb{R}_{>0} := \{x\in \Reals \,:\, x> 0\}$
Furthermore,
if $v$ is such that
	\[	
		|v(x)| \leq  C \exp\left(- \mu |x|^{\eta}\right) 
		\text{ for some $\mu, C>0$ and $\eta> 1$,}
\]  
then assumption  \eqref{eq:ass} holds with    $\nu:= \frac{\eta}{\eta-1}$ and with some positive constants $\sigma =  \sigma(\mu,\eta)$ and $N = N(C,\mu,\eta,d)$.
\end{Remark}

Under  assumption \eqref{eq:ass}, we consider the following two problems:
\begin{Problem}\label{Problem1}
	Given $\mathcal{F}v$ on the ball $B_r$. Find $v$.
\end{Problem}
\begin{Problem}\label{Problem2}
Given $\mathcal{F}v$ on the ball $B_r$.  Find  $\mathcal{F} v$ on $B_R$, where $R> r$.
\end{Problem}

Problems \ref{Problem1} and \ref{Problem2}
 are fundamental in the theory of inverse coefficient problems.
% 
% arise, in particular,
% in the framework of inverse problems.
 For example, Problem~\ref{Problem1} with $r=2\sqrt{E}$  
can be regarded as  a linearized inverse scattering problem for the Schr\"odinger
equation with potential $v$ at fixed positive energy $E$, for $d\geq 2 $, and on the the energy interval $[0,E]$, for $d\geq 1$.
More details can be found in  \cite[Section 4]{Novikov2020}.
Problem~\ref{Problem1} with $r=\omega_0$
also arises in a multi-frequency inverse source problem for the homogeneous Helmholtz equation with frequencies $\omega\in [0, \omega_0]$;
see Bao et al.~\cite[Section 3]{BLT2010} for more details. 
In addition, in many cases, Problem  \ref{Problem2} is  an essential step  for solving Problem \ref{Problem1}.  
For more applications related to Problems \ref{Problem1} and \ref{Problem2} in the case of compactly supported $v$, see  \cite{IN2020} and references therein.

The present work continues the studies of  our recent article  \cite{IN2020}, which considers   the case of compactly supported functions $v$.
Besides, in \cite{IN2020},   we deal with 
reconstructions of $\calF v$ on $[-R,R]^d$   and  $v$ on $\Reals^d$    from $\calF v$ given on  the cube $[-r,r]^d$,   in place of the balls $B_R$ and  $B_r$.  Due to the equivalence of $\|\cdot\|_2$ -norm and $\|\cdot\|_\infty$-norm in $\Reals^d$, these formulations   are essentially equivalent, but $B_R$ and $B_r$ are more natural   in the context of   inverse problems.

      In the present work,  under assumption \eqref{eq:ass}, we give H\"older-logarithmic stability estimates for Problem \ref{Problem1} in the norm 
     of  $\calL^\infty(\Reals^d)$  and of $\calH^s(\Reals^d)$, for any real $s$; see Section~\ref{S:rec}. 
     (Note that the stability estimates of \cite{IN2020} are given in the norm of $\calL^2(\Reals^d)$ only.)
     In addition, 
   %  under assumption \eqref{eq:ass}, 
      we obtain
      H\"older stability estimates for Problem \ref{Problem2}
      in the norm of $\calL^\infty(B_R)$; see Section \ref{S:con}.
    The related reconstruction procedures are also given; see 
    Sections~\ref{S:reconstruction} and~\ref{S:rec}.
      Besides, we present examples showing an optimality of our stability estimates and   reconstruction procedures; see Section \ref{S:instability}.

 \section{Reconstruction procedures}\label{S:reconstruction}

 Let 
 $\calF^{-1}$ be the classical inverse Fourier transform defined by 
\begin{equation*}
	  \mathcal{F}^{-1} [u](x) := \int \limits_{\mathbb{R}^d}u(\xi) e^{-i\xi x} d \xi, \qquad x\in \Reals^d.
\end{equation*}      
For a given $r>0$, we consider  the following    family of  extrapolations   $\mathcal{C}_{R,n}: \calL^\infty(B_r) \rightarrow \calL^\infty(B_R)$,  depending on two parameters 
$R\geq r$ and 
$n \in \mathbb{N}:= \{0,1,\ldots\}$.
%
%We use  the truncated series  
%of  Chebyshev polynomials $T_k$,  for $k \in \mathbb{N}:=\{n \in \mathbb{Z} \st n\geq  0\}$ to define 
%the family of  continuation operators  $\mathcal{C}_{R,n}$ 
%Consider the following family of continuations $\mathcal{C}_{R,n}$ 
 For a  function $w$  on $B_r$  (for example, such that $w \approx \mathcal{F}v|_{B_r}$), we define
\begin{equation}\label{def_C}
	[{\mathcal C}_{R,n} w](\xi) := 
	\begin{cases}
	 w(\xi), & \xi \in B_r,\\\displaystyle
	 \sum \limits_{k=0}\limits^{n-1} 
	a_k\left(\frac{\xi}{|\xi|}\right) T_k\left(\frac{|\xi|}{r}\right), & \xi \in B_R \setminus B_r,
	\\
		0,	& \xi \in \mathbb{R}^d\setminus B_R,
	\end{cases}
\end{equation}
where $\xi = |\xi|\theta$ and, for   $\theta \in S^{d-1}$,
\begin{equation}\label{def_a}
	a_k(\theta)  = a_k[w](\theta):=
	\begin{cases}
	 \displaystyle \frac{1}{\pi}\int\limits_{-r}\limits^r \frac{w(t\theta)}{\sqrt{r^2- t^2}} dt, & \text{if } k=0,\\
	 \displaystyle \frac{2}{\pi}\int\limits_{-r}\limits^r \frac{w(t\theta)T_k\left(\frac{t}{r}\right)}{\sqrt{r^2- t^2}} dt,
	 &\text{otherwise.}
	\end{cases}
\end{equation}
In the above, $(T_k)_{k\in \mathbb{N}}$ stand  for the Chebyshev polynomials on $\Reals$, which can be defined  
by   $T_k(t):= \cos(k\operatorname{arccos}(t))$ if $t\in [-1,1]$ and extended  to $|t|>1$ in a natural way.
 For $n=0$, the sum in \eqref{def_C} is taken to be $0$. 
Note that formulas \eqref{def_C} and \eqref{def_a} are correctly defined for almost all $\xi$ and $\theta$ under the assumption that $w \in \calL^\infty(B_r)$.

%Recall that the Chebyshev polynomials $T_k$,  for $k \in \mathbb{N}:=\{n \in \mathbb{Z} \st n\geq  0\}$,  can be defined by  
%\begin{equation} \label{def_Cheb}
%	T_k(x):= 
%	\begin{cases}
%	 \cos(k\operatorname{arccos}(x)), & |x|\leq 1,\\
%	\dfrac{(x-\sqrt{x^2-1})^k + (x+\sqrt{x^2-1})^k}{2}, &|x|>1.
%\end{cases}
%\end{equation}

Suppose $w  \approx \calF v |_{B_r}$.
The transforms  $\mathcal{C}_{R,n} w$  on  $B_R$ can be considered as a family of reconstruction procedures for Problem \ref{Problem2}.
The transforms  $ \mathcal{F}^{-1} \mathcal{C}_{R,n} w $  on  $\Reals^d$ can be considered as a family of reconstruction procedures for Problem \ref{Problem1}. 

 In Section \ref{S:rec}, we give stability estimates for 
 Problem \ref{Problem1} arising from the reconstructions  $\mathcal{F}^{-1} \mathcal{C}_{R,n}$; see Theorem \ref{Theorem1} and Theorem \ref{Theorem2}.
  In Section \ref{S:con}, we give stability estimates for 
 Problem \ref{Problem2}  arising from the extrapolations 
 $\mathcal{C}_{R,n}$;  see  Lemma \ref{Lemma_C}, Theorem \ref{Theorem3}, and Corollary \ref{Corollary_C}.

\section{Stability estimates for Problem \ref{Problem1}}\label{S:rec}
%
%All aforementioned  results (Theorem \ref{Theorem2},  Theorem \ref{Theorem3}, and  Corollary~\ref{Corollary_C}) share the following assumptions in common:  
We will assume that 
the  unknown function $v:\Reals^d \rightarrow \Complexes$ 
satisfies  \eqref{eq:ass}  for some  $N,\sigma>0$ and $\nu\geq 1$ and
%  and sufficientl
%   \begin{equation} \label{eq:ass}
%   Q_v(\lambda)\leq  N\exp\left(\sigma \lambda^{\nu}\right) 
%  \end{equation}   
%     and 
 the given data $w$ is such that, for some $\delta, r>0$,
\begin{equation}\label{eq:ass2}
     \|w - \calF v\|_{\calL^\infty(B_r)} \leq \delta  <N, 
   \end{equation}
%\end{equation}
where $\mathcal{F}$ is  the Fourier transform.  Note that
if   \eqref{eq:ass} holds then, for any $\xi \in \Reals^d$,
\begin{equation}\label{eq:N-delta}
	|\calF v(\xi)| \leq \frac{1}{(2\pi)^d} \int_{\Reals^d}|v(x)|dx  = Q_v(0) \leq N.
\end{equation}
This explains the condition $\delta<N$ in assumption \eqref{eq:ass2}. Indeed, if the noise level $\delta$ is greater than $N$ then the given data $w$ tells  about  $v$  as little as the trivial function $w_0\equiv 0$.

To 
achieve optimal stability  bounds,
  the parameters $R$ and $n$
   in the reconstruction $\calF^{-1}\calC_{R,n}$    have to be chosen carefully depending on $N, \delta, r, \sigma$.   For any $\tau \in [0,1]$, let
\begin{equation}\label{def_L}
			L_{\tau}(\delta) = L_\tau(N,\delta,r,\sigma,\nu):= \max\left\{1,\, \frac{1}{2}\left(\frac{(1-\tau) \ln  \frac{N}{\delta}}{\sigma r^{\nu}}\right)^\tau\right\}.
	\end{equation}
 Here and thereafter, we assume  $0<\delta<N$. 
Using \eqref{def_C}, define 
	\begin{equation}\label{def_Cstar}
		\calC^*_{\tau,\delta} := {\mathcal C}_{R_\tau(\delta),n_\tau(\delta)},
	\end{equation}
	 where 
	\begin{equation}\label{def_R}
		\begin{aligned}
		R_\tau(\delta) &= R_\tau(N, \delta,r,\sigma,\nu):= r L_{\tau}(\delta),\\
		n_\tau(\delta) &= n_\tau(N,\delta,r, \sigma,\nu):= 
		\begin{cases}
		\displaystyle \left\lceil\frac{ (2-\tau)\ln\frac{N}{\delta}}{ \ln 2 + \frac{1}{\tau \nu}\ln (2L_{\tau}(\delta))}
		\right\rceil, 
		&\text{ if    $\tau>0$,}\\
		0,&\text{otherwise.}
		\end{cases}
		\end{aligned}
	\end{equation}
	and $\lceil\cdot\rceil$ denotes the ceiling of a real number. 
	Let 
	\begin{equation}\label{def_c}
		c(d) :=   \int\limits_{\partial B_1} 1\, dx = \frac{d \pi^{d/2}}{\Gamma(\frac{d}{2}+1)}.
	\end{equation}
	
%\begin{equation}
%	\begin{aligned}
%	W^{m,p}(\mathbb{R}^3) = \{w:\  \partial^J w \in L^1(\mathbb{R}^3),\  |J| \leq m \},\ m \in \mathbb{N}\cup\{0\},
%\\
%	||w||_{m,1} = \max\limits_{|J|\leq m} ||\partial^J w||_{L^1(\mathbb{R}^3)},
%\\
%J \in (\mathbb{N}\cup \{0\})^3,\ |J| = J_1+J_2+J_3,\ \partial^J w(x) 
%= \frac{\partial^{|J|} w(x)}{\partial x_1^{J_1}\partial x_1^{J_2} \partial x_3^{J_3}},
%\end{aligned}
%\end{equation}

%\subsection{Stability of the reconstruction $\mathcal{F}^{-1}{\mathcal C}^{\tau}$}\label{S:v-recon}

Our first result is  a stability estimate for Problem \ref{Problem1} in the norm $\calL^\infty(\Reals^d)$.
In addition to \eqref{eq:ass}, we assume also   that   $v \in \calW^m(\Reals^d)$,
 where  the space $ \calW^m(\Reals^d)$, $m\geq 0$, and its norm are defined by
\begin{equation*}\label{def_Wm}
	\begin{aligned}
	\calW^{m}(\mathbb{R}^d) &:= \left\{u \in \calL^1(\mathbb{R}^d)\st (1+|\xi|^2)^{\frac{m}{2}} \mathcal{F} u \in  \calL^\infty(\mathbb{R}^d) \right\},
\\
	||u||_{\calW^{m}(\mathbb{R}^d)} &:= \left\| (1+|\xi|^2)^{\frac{m}{2}} \mathcal{F} u \right\|_{\calL^\infty(\mathbb{R}^d)}.
\end{aligned}
\end{equation*}
 We note that for integer $m$ the space $\calW^{m}(\mathbb{R}^d)$ contains  the standard Sobolev space $\calW^{m,1}(\mathbb{R}^d)$  of $m$-times smooth functions in $\calL^1$ on $\mathbb{R}^d$.

%Let
%\begin{equation}\label{def_c12}
%	c_1 = \int \limits_{B_1}1\,  dx = \frac{\pi^{d/2}}{\Gamma(\frac{d}{2}+1) }, \ \ \ \ \ c_2 = \int\limits_{\partial B_1} 1\, dx = \frac{d \pi^{d/2}}{\Gamma(\frac{d}{2}+1) },
%\end{equation}
%where  $\Gamma$ is the gamma function.
\begin{Theorem}\label{Theorem1} 
Let  the assumptions of \eqref{eq:ass} and \eqref{eq:ass2} hold for some $N,\sigma, r, \delta>0$  and $\nu\geq 1$. 
	Assume  also that  $v \in \calW^m(\mathbb{R}^d)$, 
	%\sout{, $\|v\|_{\calW_m(\mathbb{R}^d)} \leq N_{\calW^m}$,} 
	for some real $m>d$, and   that $ \|v\|_{\calW^m(\mathbb{R}^d)} \leq \gamma_1$.
	%\sout{and $N_{\calW^m}>0$}. 
	Then, for any $\alpha$ such that $0\leq \alpha\leq 1$, the following estimate holds:
			\begin{equation}\label{eq_Theorem1}
		\begin{aligned}
		\left\|v - \mathcal{F}^{-1}\calC^*_{\tau,\delta} w \right\|_{\calL^{\infty}(\mathbb{R}^d)}
		 &\leq  
		   \dfrac{8c(d) }{d}  N^{1-\alpha} r^d   \left({L}_{\tau}(\delta)\right)^{d+1} 
		   \delta^{\alpha}\\
		  &+  \dfrac{c(d) }  {m-d}     \gamma_1	 	r^{-m+d} \left({L}_{\tau}(\delta)\right)^{-m+d},
		 \end{aligned}
	\end{equation}
	where
	$\tau  = 1 - \sqrt{1 - (1-\alpha) \nu^{-1}}$ 
	and  ${L}_{\tau}(\delta)$, $\calC^*_{\tau,\delta}$, $c(d)$ 
	are defined by \eqref{def_L}, \eqref{def_Cstar}, \eqref{def_c}.  
	In particular,    for any   $\beta_1$ such that 
	$ 0<\dfrac{\beta_1}{m-d}  <1- \sqrt{1- \nu^{-1}}$, we have
	%$\sigma$, $r$ $m$,  $d$ and $\tau$
	%for any $\tau$ such that $0<\tau < 1- \sqrt{1- \nu^{-1}}$,  the following estimate holds:
	\begin{equation}\label{log-1}
 			\left\|v - \mathcal{F}^{-1}\calC^*_{\tau,\delta} w \right\|_{\calL^{\infty}(\mathbb{R}^d)}
 			\leq    c_1 \left(\ln (3+\delta^{-1})\right)^{-\beta_1},
 	\end{equation}
 	where  $\tau = \dfrac{\beta_1}{m-d}$ and $c_1= c_1(N,\sigma, \nu,     r,   m,  \gamma_1, d, \beta_1) $ is a positive constant. 
%	and 
%	$\alpha := 1- \nu \tau(2-\tau)$.
%%	where 
%%	\begin{equation*} %\label{def_c12}
%%	c_1 := \int \limits_{B_1}1\,  dx = \frac{\pi^{d/2}}{\Gamma(\frac{d}{2}+1) }, \ \ \ \ \ c_2 := \int\limits_{\partial B_1} 1\, dx = \frac{d \pi^{d/2}}{\Gamma(\frac{d}{2}+1) }.
%%\end{equation*}
\end{Theorem}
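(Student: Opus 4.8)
The plan is to estimate the reconstruction error through its Fourier image. Since $v - \mathcal{F}^{-1}\calC^*_{\tau,\delta}w = \mathcal{F}^{-1}\bigl[\calF v - \calC^*_{\tau,\delta}w\bigr]$ (the identity $v=\mathcal{F}^{-1}\calF v$ being legitimate because $v\in\calW^m$ with $m>d$ forces $\calF v\in\calL^1$), and since $\|\mathcal{F}^{-1}u\|_{\calL^\infty(\Reals^d)}\le \|u\|_{\calL^1(\Reals^d)}$ directly from the definition of $\mathcal{F}^{-1}$, it suffices to bound $\|\calF v - \calC^*_{\tau,\delta}w\|_{\calL^1(\Reals^d)}$. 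Writing $R=R_\tau(\delta)$ and $n=n_\tau(\delta)$ as in \eqref{def_R}, I would split $\Reals^d = B_R \cup (\Reals^d\setminus B_R)$. On $\Reals^d\setminus B_R$ the extrapolation vanishes by \eqref{def_C}, so the integrand is $|\calF v|$; here the hypothesis $\|v\|_{\calW^m}\le\gamma_1$ gives $|\calF v(\xi)|\le \gamma_1(1+|\xi|^2)^{-m/2}$, and integrating in polar coordinates with $c(d)$ from \eqref{def_c} produces exactly the second, purely deterministic term of \eqref{eq_Theorem1} after $R=rL_\tau(\delta)$ is substituted. The whole difficulty is thus concentrated on $B_R$, where I would pass to the sup-norm via $\|\cdot\|_{\calL^1(B_R)}\le \mathrm{vol}(B_R)\,\|\cdot\|_{\calL^\infty(B_R)}=\frac{c(d)}{d}r^dL_\tau(\delta)^d\,\|\cdot\|_{\calL^\infty(B_R)}$; the remaining factor $8N^{1-\alpha}\delta^\alpha L_\tau(\delta)$ must then emerge from the pointwise extrapolation estimate.

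On $B_r$ the extrapolation reproduces $w$, so there the integrand is $|\calF v-w|\le\delta$ by \eqref{eq:ass2}. On the annulus $B_R\setminus B_r$, writing $\xi=|\xi|\theta$ and using linearity of the coefficients \eqref{def_a}, I would split
\[
\calF v(\xi)-[\calC^*_{\tau,\delta}w](\xi)
=\Bigl(\calF v(\xi)-\sum_{k=0}^{n-1}a_k[\calF v](\theta)\,T_k(\tfrac{|\xi|}{r})\Bigr)
+\sum_{k=0}^{n-1}a_k[\calF v-w](\theta)\,T_k(\tfrac{|\xi|}{r}),
\]
i.e.\ into a Chebyshev truncation error for $\calF v$ and a noise term. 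For the noise term, \eqref{eq:ass2} together with $\int_{-r}^r(r^2-t^2)^{-1/2}\,dt=\pi$ gives $|a_k[\calF v-w]|\le 2\delta$ uniformly in $k$ and $\theta$; combined with the classical growth bound $|T_k(s)|\le (s+\sqrt{s^2-1})^k\le (2L_\tau(\delta))^k$ for $1\le s\le L_\tau(\delta)$, this term is $\lesssim \delta\,(2L_\tau(\delta))^{n}$. For the truncation term I would use that \eqref{eq:ass} forces $\calF v$ to extend to an entire function with $|\calF v(\xi)|\le N\exp(\sigma|\mathrm{Im}\,\xi|^\nu)$; restricting to the complex ray $z\mapsto\calF v(z\theta)$ and substituting $t=r\cos\phi$ in \eqref{def_a} exhibits $a_k[\calF v](\theta)$ as a cosine coefficient, which a contour shift by $i\beta$ bounds by $\lesssim N\exp\bigl(\sigma(r\sinh\beta)^\nu\bigr)e^{-k\beta}$ for every $\beta>0$. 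Optimising $\beta$ yields super-geometric decay of $a_k[\calF v]$, so that the tail $\sum_{k\ge n}a_k[\calF v]\,T_k$ is dominated by its first term once $n$ exceeds the crossover scale.

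The crux is the choice of $R_\tau(\delta)$, $n_\tau(\delta)$ in \eqref{def_R} together with $\tau=1-\sqrt{1-(1-\alpha)\nu^{-1}}$. The latter is equivalent to $1-\alpha=\nu\tau(2-\tau)$, and this is precisely what makes the two error sources cancel: inserting \eqref{def_L} one checks that, as $\delta\to0$, the dominant balance in \eqref{def_R} gives $n_\tau(\delta)\ln(2L_\tau(\delta))\sim(1-\alpha)\ln\frac{N}{\delta}$, so the amplified noise $\delta\,(2L_\tau(\delta))^{n_\tau}$ is of order $N^{1-\alpha}\delta^\alpha$, while the super-geometric coefficient decay makes the optimised truncation error of the same or smaller order. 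The main obstacle is closing this bookkeeping rigorously for all $0<\delta<N$: controlling the effect of the ceiling and of the subdominant $\ln 2$ correction in $n_\tau(\delta)$ (which only inflate the estimate by a bounded power of $L_\tau(\delta)$, accounted for by the extra factor $L_\tau(\delta)$ and the constant $\tfrac{8c(d)}{d}$), and separately handling the regime $L_\tau(\delta)=1$ where the max in \eqref{def_L} is trivial. Summing the $B_r$, annulus and tail contributions then yields \eqref{eq_Theorem1}.

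Finally, for \eqref{log-1} I would fix $\tau=\beta_1/(m-d)$; the hypothesis $0<\beta_1/(m-d)<1-\sqrt{1-\nu^{-1}}$ is exactly $\nu\tau(2-\tau)<1$, i.e.\ $\alpha=1-\nu\tau(2-\tau)$ is a fixed number in $(0,1]$, so \eqref{eq_Theorem1} applies with this $\alpha$. Since $L_\tau(\delta)\le 1+C(\ln(3+\delta^{-1}))^{\tau}$ for a constant $C$ depending on the fixed parameters, the second term of \eqref{eq_Theorem1} is $O\bigl((\ln(3+\delta^{-1}))^{-\tau(m-d)}\bigr)=O\bigl((\ln(3+\delta^{-1}))^{-\beta_1}\bigr)$, while the first term carries the genuine power $\delta^{\alpha}$ with $\alpha>0$ fixed and is therefore dominated by any negative power of the logarithm as $\delta\to0$ and bounded for $\delta$ away from $0$. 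Choosing $c_1=c_1(N,\sigma,\nu,r,m,\gamma_1,d,\beta_1)$ large enough to majorise both contributions uniformly on $0<\delta<N$ gives \eqref{log-1}.
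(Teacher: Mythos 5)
Your proposal is correct and follows essentially the same route as the paper: the same split of the Fourier-side $\calL^1$ error into the ball $B_{R_\tau(\delta)}$ (handled by the Chebyshev extrapolation estimate, itself decomposed into a noise-amplification term and a holomorphic truncation term bounded via a contour shift exploiting \eqref{eq:ass}) and its complement (handled by the $\calW^m$ tail bound), with the same balancing identity $1-\alpha=\nu\tau(2-\tau)$ driving the choice of $R_\tau,n_\tau$. The only cosmetic difference is that you optimise the contour shift per coefficient $a_k[\calF v]$, whereas the paper fixes a single Bernstein ellipse $D(2\lambda/r)$ with $\lambda=r(2L_\tau(\delta))^{1/(\nu\tau)}$ and absorbs the ceiling and the $\ln 2$ correction through the clean inequality $\tau(\lambda)\leq\nu\tau$; the bookkeeping you flag as the remaining obstacle closes exactly as you describe.
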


Our second result is  a stability estimate for Problem \ref{Problem1} in the norm $\calH^s(\Reals^d)$. Recall that
  the Sobolev space $\calH^s(\Reals^d)$, $s\in \Reals$, and its norm can be defined by
\begin{equation*}\label{def_Hm}
	\begin{aligned}
	\calH^{s}(\mathbb{R}^d) &:= \left\{u \in \calL^2(\mathbb{R}^d):\ \mathcal{F}^{-1} (1+|\xi|^2)^{\frac{s}{2}} \mathcal{F} u \in  \calL^2(\mathbb{R}^d) \right\},
\\
	||u||_{\calH^{s}(\mathbb{R}^d)} &:= \left\|\mathcal{F}^{-1} (1+|\xi|^2)^{\frac{s}{2}} \mathcal{F} u \right\|_{\calL^2(\mathbb{R}^d)}.
\end{aligned}
\end{equation*}

\begin{Theorem} \label{Theorem2}
Let  the assumptions of \eqref{eq:ass} and \eqref{eq:ass2} hold for some $N,\sigma, r, \delta>0$  and $\nu\geq 1$. 
Assume also that	 $v \in \calH^m(\mathbb{R}^d)$,  for some real $m \geq -\dfrac d  2$, and
that  $ \|v\|_{\calH^m(\mathbb{R}^d)} \leq \gamma_2$.  Then, for any $\alpha \in [0,1]$	 and any $s < m$,  the following estimate holds:
	 \begin{equation} \label{eq_Theorem2}
		\begin{aligned}
		\|v - \mathcal{F}^{-1}\calC^*_{\tau,\delta}w \|_{\calH^{s}(\mathbb{R}^d)}
		  &\leq   8 (2\pi)^{d/2} c(d)N^{1-\alpha }\left( \int_{0}^{r L_\tau(\delta)} (1+t^2)^s t^{d-1}dt \right)^{1/2}     {L}_{\tau}(\delta)
		  \, \delta^{\alpha} 
		   \\
		  &+ \gamma_2  r^{-m+s} \left({L}_{\tau}(\delta)\right)^{-m+s},
		 \end{aligned}
		 \end{equation}
where
	$\tau := 1 - \sqrt{1 - (1-\alpha) \nu^{-1}}$  and ${L}_{\tau}(\delta)$, $\calC^*_{\tau,\delta}$,  
	%$R_\tau(\delta)$, 
	$c(d)$  
	are defined by \eqref{def_L},  \eqref{def_Cstar}, 
	%\eqref{def_R}, 
	\eqref{def_c}.
		In particular,    for any   $\beta_2$ such that 
	$ 0<\dfrac{\beta_2}{m-s}  <1- \sqrt{1- \nu^{-1}}$, we have
	%$\sigma$, $r$ $m$,  $d$ and $\tau$
	%for any $\tau$ such that $0<\tau < 1- \sqrt{1- \nu^{-1}}$,  the following estimate holds:
	\begin{equation}\label{log-2}
 			\left\|v - \mathcal{F}^{-1}\calC^*_{\tau,\delta} w \right\|_{\calH^{s}(\mathbb{R}^d)}
 			\leq    c_2 \left(\ln (3+\delta^{-1})\right)^{-\beta_2},
 	\end{equation}
 	where  $\tau = \dfrac{\beta_2}{m-s}$ and $c_2= c_2(N, \sigma, \nu, r,   m, s, \gamma_2, d, \beta_2) $ is a positive constant. 
\end{Theorem}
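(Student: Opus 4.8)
The plan is to pass to the Fourier side, where the $\calH^s$-norm becomes a weighted $\calL^2$-norm, and then to exploit the three-piece structure of $\calC^*_{\tau,\delta}w$ in \eqref{def_C}. Writing $R = rL_\tau(\delta)$ and using $\mathcal{F}\mathcal{F}^{-1}=\mathrm{id}$ together with the Parseval identity, which for the normalization of the excerpt reads $\|\mathcal{F}^{-1}g\|_{\calL^2}=(2\pi)^{d/2}\|g\|_{\calL^2}$, I would start from
\begin{equation*}
\left\|v - \mathcal{F}^{-1}\calC^*_{\tau,\delta}w\right\|_{\calH^s(\Reals^d)} = (2\pi)^{d/2}\left\|(1+|\xi|^2)^{s/2}\left(\calF v - \calC^*_{\tau,\delta}w\right)\right\|_{\calL^2(\Reals^d)},
\end{equation*}
and split the domain as $\Reals^d = B_R \cup (\Reals^d\setminus B_R)$, bounding the $\calL^2$-norm by the sum of the two contributions.

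On the exterior $\Reals^d\setminus B_R$ one has $\calC^*_{\tau,\delta}w=0$, so the integrand is $(1+|\xi|^2)^{s/2}|\calF v|$. Since $s<m$ and $|\xi|\geq R$ there, I would write $(1+|\xi|^2)^{(s-m)/2}\leq R^{s-m}$, factor out this weight, and use $\|(1+|\xi|^2)^{m/2}\calF v\|_{\calL^2} = (2\pi)^{-d/2}\|v\|_{\calH^m}\leq (2\pi)^{-d/2}\gamma_2$. After multiplying by the outer $(2\pi)^{d/2}$ and inserting $R = rL_\tau(\delta)$, this yields exactly the second term $\gamma_2\, r^{-m+s}(L_\tau(\delta))^{-m+s}$ of \eqref{eq_Theorem2}.

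The interior $B_R$ is the crux. Here I would bound the weighted $\calL^2$-norm by the $\calL^\infty$-norm of the error times the $\calL^2$-norm of the weight, in polar coordinates and using \eqref{def_c},
\begin{equation*}
\left\|(1+|\xi|^2)^{s/2}(\calF v - \calC^*_{\tau,\delta}w)\right\|_{\calL^2(B_R)} \leq \left\|\calF v - \calC^*_{\tau,\delta}w\right\|_{\calL^\infty(B_R)}\left(c(d)\int_0^{rL_\tau(\delta)}(1+t^2)^s t^{d-1}\,dt\right)^{1/2}.
\end{equation*}
The remaining input is the uniform extrapolation estimate for Problem \ref{Problem2}: with the tuned parameters $R_\tau(\delta),n_\tau(\delta)$ and $\tau = 1-\sqrt{1-(1-\alpha)\nu^{-1}}$, one has $\|\calF v - \calC^*_{\tau,\delta}w\|_{\calL^\infty(B_R)}\leq 8N^{1-\alpha}L_\tau(\delta)\delta^\alpha$, which is the content of the Problem \ref{Problem2} analysis (Lemma \ref{Lemma_C}, Theorem \ref{Theorem3}, Corollary \ref{Corollary_C}). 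Granting it, together with $\sqrt{c(d)}\leq c(d)$ (valid since $c(d)\geq 2$ for $d\geq 1$), the interior contribution gives the first term of \eqref{eq_Theorem2}. I expect this $\calL^\infty$ extrapolation bound to be the main obstacle: it requires expanding $\calF v$ along each ray $t\mapsto \calF v(t\theta)$ in Chebyshev polynomials, estimating the truncation tail via the decay of the Chebyshev coefficients of the entire extension of $\calF v$ (controlled by $Q_v$ through \eqref{eq:ass}) against the growth of $|T_k(\rho/r)|$ for $\rho>r$, controlling the noise-amplification term $\sum_{k<n}|a_k[w]-a_k[\calF v]|\,|T_k(\rho/r)|$ by $\delta$ times the same growth, and finally optimizing this truncation-versus-amplification trade-off through the stated choices \eqref{def_R} of $R$ and $n$.

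Finally, for the logarithmic corollary \eqref{log-2} I would set $\tau = \beta_2/(m-s)$, which by the hypothesis $0<\beta_2/(m-s)<1-\sqrt{1-\nu^{-1}}$ corresponds to an admissible $\alpha\in(0,1)$. As $\delta\to0$ one has $L_\tau(\delta)\asymp(\ln(N/\delta))^\tau$, so the second term of \eqref{eq_Theorem2} behaves like $(\ln(1/\delta))^{-\tau(m-s)} = (\ln(1/\delta))^{-\beta_2}$, while the first term is of order $\delta^\alpha(\ln(1/\delta))^{C}$ for some $C$ and is therefore negligible compared with $(\ln(1/\delta))^{-\beta_2}$. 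The $\max\{1,\cdot\}$ in \eqref{def_L} together with the shift $3+\delta^{-1}$ make the resulting estimate uniform over all $\delta\in(0,N)$, which gives \eqref{log-2} with a suitable constant $c_2$.
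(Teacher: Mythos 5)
Your proposal is correct and follows essentially the same route as the paper's own proof: pass to the Fourier side via the Parseval--Plancherel identity, split at $B_{R_\tau(\delta)}$, control the interior by the $\calL^\infty$ extrapolation bound of Corollary \ref{Corollary_C} times the weight integral $\left(c(d)\int_0^{rL_\tau(\delta)}(1+t^2)^st^{d-1}dt\right)^{1/2}$, and control the exterior by the $\calH^m$ bound with the weight comparison $(1+|\xi|^2)^{(s-m)/2}\leq R^{s-m}$. Your sketch of how the extrapolation estimate itself is obtained (Chebyshev expansion along rays, coefficient decay via $Q_v$, growth of $T_k$, and the truncation-versus-amplification optimization) also matches the paper's Lemma \ref{Lemma_C}--Theorem \ref{Theorem3}--Corollary \ref{Corollary_C} chain.
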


The proofs of Theorems \ref{Theorem1} and \ref{Theorem2} are given in Section \ref{S:proofs}.
The first terms of the right-hand side in  estimates \eqref{eq_Theorem1} and   \eqref{eq_Theorem2} correspond 
  to the error caused by the H\"older stable extrapolation of the noisy data $w$  from $B_r$ to $B_{R_\tau(\delta)}$ and the second (logarithmic) terms correspond to the error  caused by ignoring the values of  $\calF v$ outside  
  $B_{R_\tau(\delta)}$;  see Section \ref{S:proofs} for more details.

%Theorems \ref{Theorem1} and \ref{Theorem2} directly imply the following corollary.
Let $N$, $\sigma$, $\nu$, $r$, $m$, $\gamma_1$, $\gamma_2$, $d$ be fixed.  Then estimates \eqref{log-1} and \eqref{log-2} used for $v := v_1-v_2$ and $w:=w_0\equiv 0$
yield the following corollary.

\begin{Corollary}\label{Corollary}
	Let   $v_1$ and   $v_2$  
	be such that $v:=v_1 - v_2$ 
	satisfies   \eqref{eq:ass} for some $N,\sigma>0$ and $\nu\geq 1$.
	Let $\tau$  be such that $0<\tau < 1- \sqrt{1- \nu^{-1}}$.
	Then the following  bounds hold.
	\begin{itemize}
		\item[(a)] If $v_1  - v_2  \in  \calW^m(\mathbb{R}^d)$, for some real $m<d$,
		and $\|v_1  - v_2\|_{\calW^m(\mathbb{R}^d)} \leq \gamma_1$,  then
\begin{equation}\label{eq:cor1}
 			\left\|v_1 - v_2 \right\|_{\calL^{\infty}(\mathbb{R}^d)}
 			\leq    c_1 \left(\ln \left(3+ 
 			 \frac{1}{\|\calF v_1 - \calF v_2\|_{\calL^{\infty}(B_r)}}
 			\right)\right)^{-\beta_1},
 	\end{equation}		
 	where $\beta_1 = \tau(m-d)$ and $c_1$ is the constant of  \eqref{log-1}.
 			
 			\item[(b)] If $v_1  - v_2 \in  \calH^m(\mathbb{R}^d)$  for some real $m \geq -\dfrac d  2 $, 
 		$\|v_1  - v_2\|_{\calH^m(\mathbb{R}^d)} \leq \gamma_2$, and
 			  $s < m$,  then
\begin{equation} \label{eq:cor2}
 			\left\|v_1 - v_2 \right\|_{ \calH^s(\mathbb{R}^d)}
 			\leq    c_2 \left(\ln \left(3+ 
 			 \frac{1}{\|\calF v_1 - \calF v_2\|_{\calL^{\infty}(B_r)}}
 			\right)\right)^{-\beta_2},
 	\end{equation}
 	where $\beta_2 = \tau(m-s)$ and $c_2$ is the constant of  \eqref{log-2}.
\end{itemize}
\end{Corollary}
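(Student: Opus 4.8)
The plan is to derive the corollary directly from the logarithmic bounds \eqref{log-1} and \eqref{log-2} of Theorems \ref{Theorem1} and \ref{Theorem2}, specialized to a zero data input. The one structural fact that makes this work is that the extrapolation operator annihilates the trivial function: the coefficients $a_k[w]$ in \eqref{def_a} are linear functionals of $w$, and each branch of \eqref{def_C} is correspondingly linear in $w$, so $\calC^*_{\tau,\delta} w_0 \equiv 0$ for $w_0 \equiv 0$, and therefore $\calF^{-1}\calC^*_{\tau,\delta} w_0 \equiv 0$.

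With this in hand, I would apply the theorems to $v := v_1 - v_2$ and $w := w_0 \equiv 0$, choosing $\delta := \|\calF v_1 - \calF v_2\|_{\calL^\infty(B_r)}$. This choice makes the data assumption \eqref{eq:ass2} an equality, so it holds automatically; the required strict bound $\delta < N$ follows from \eqref{eq:N-delta} applied to $v_1 - v_2$, which satisfies \eqref{eq:ass} by hypothesis. Under these substitutions the left-hand side of \eqref{log-1} reduces to $\|v_1 - v_2 - 0\|_{\calL^\infty(\Reals^d)} = \|v_1 - v_2\|_{\calL^\infty(\Reals^d)}$, while its right-hand side becomes $c_1 (\ln(3 + \delta^{-1}))^{-\beta_1}$ with $\delta$ as above. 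Identifying the relation $\tau = \beta_1/(m-d)$ of \eqref{log-1} with $\beta_1 = \tau(m-d)$ reproduces exactly \eqref{eq:cor1}, and the admissible range $0 < \beta_1/(m-d) < 1 - \sqrt{1-\nu^{-1}}$ matches the stated constraint $0 < \tau < 1 - \sqrt{1-\nu^{-1}}$. Part (b) is obtained identically from \eqref{log-2}, now with $\delta$ substituted into the $\calH^s$ estimate and $\beta_2 = \tau(m-s)$.

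There is no genuinely hard step here; the argument is a substitution into already-proved estimates. The only points meriting a line of care are the linearity computation yielding $\calF^{-1}\calC^*_{\tau,\delta} 0 \equiv 0$ and the verification that the substituted $\delta$ meets the strict inequality $\delta < N$. The boundary case $\delta = 0$, i.e. $\calF v_1 = \calF v_2$ on $B_r$, is absorbed by reading $\ln(3 + \delta^{-1})$ as $+\infty$, so that \eqref{eq:cor1} and \eqref{eq:cor2} read $\|v_1 - v_2\| \leq 0$, consistent with the uniqueness forced by analyticity of $\calF(v_1 - v_2)$ under \eqref{eq:ass}.
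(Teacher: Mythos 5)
Your proposal is correct and is essentially the paper's own argument: the authors prove the corollary in a single sentence by applying \eqref{log-1} and \eqref{log-2} to $v:=v_1-v_2$ and $w:=w_0\equiv 0$, which is exactly your substitution, and your additional observations (linearity giving $\calC^*_{\tau,\delta}0\equiv 0$, the identification $\beta_1=\tau(m-d)$, $\beta_2=\tau(m-s)$, and the degenerate case $\delta=0$) are the details the paper leaves implicit. The only nitpick is that \eqref{eq:N-delta} yields $\delta\leq N$ rather than the strict inequality, but this borderline case is equally unaddressed in the paper and is harmless since the constants $c_1,c_2$ can absorb it.
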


One can see that the estimates     of Theorem \ref{Theorem1}, Theorem \ref{Theorem2}, and Corollary \ref{Corollary} are available 
for any $\beta_1$, $\beta_2$ such that:
\begin{equation}\label{b-max} 
\begin{aligned}
	0<\beta_1<\beta_1^{\max} :=   \left( 1- \sqrt{1- \nu^{-1}}\right) (m-d),
	\\
	0<\beta_2< \beta_2^{\max} :=   \left( 1- \sqrt{1- \nu^{-1}}\right) (m-s).
\end{aligned}
\end{equation}
In particular,  for $\nu=1$, we have that  $\beta_1^{\max}= m-d$ and $\beta_2^{\max} = m-s$.  
 In Section \ref{S:instability},  
      we present instability examples showing  that
      \begin{itemize}
     \item   if $\nu=1$  then 
     \eqref{eq:cor1} is impossible for
        $\beta_1>m$ and \eqref{eq:cor2} is impossible for $\beta_2>m$ (when $d\geq 2$ and $s=0$)
      and  for $\beta_2>m+\frac12 $ (when $d=1$ and $s=0$);
      \item if $\nu=2$ then  
     \eqref{eq:cor1} and \eqref{eq:cor2} (with $s=0$) are impossible for
     $\beta_1,\beta_2>m/2$.
                  \end{itemize}
 These examples show that the logarithmic bounds 
 \eqref{eq:cor1} and \eqref{eq:cor2} 
 are rather optimal with respect to the values 
  of the exponents $\beta_1$ and $\beta_2$. Consequently, in this respect, it is impossible to essentially   improve the  stability bounds of  Theorems \ref{Theorem1} and \ref{Theorem2}, even using any other reconstruction procedure (for example, based on a  more advanced basis instead of Chebyshev polynomials).

 Moreover, observe that, for the case $\nu=1$, 
 the values of $\beta_1^{\max}$ and $\beta_2^{\max}$ are very close to  the best possible: for example, we determined that  $\beta_2 =m$ is indeed  the threshold value  for \eqref{eq:cor2}  when $d\geq 2$ and $s=0$.  
However, we do not know whether 
the claimed exponents  $\beta_1^{\max}, \beta_2^{\max} = 
\left( 1- \sqrt{1- \nu^{-1}}\right) m +O(1)$
 are also that close to optimal for $\nu>1$. Our instability examples   for $\nu=2$ imply that  they can not exceed $m/2$, but there   is still a gap from $m/2$ down to $\left(1-\dfrac1{\sqrt{2}}\right)m$.

 Theorem \ref{Theorem1}, Theorem \ref{Theorem2}, and Corollary \ref{Corollary} illustrate   similar stability behaviour in more complicated non-linear  inverse problems. 
In fact,  the relationship is  closer than a mere illustration taking into account that the monochromatic reconstruction from the scattering amplitude  in the Born approximation is reduced to Problem \ref{Problem1}.  
In particular, estimates  \eqref{log-2}, \eqref{eq:cor1} and \eqref{eq:cor2} (with $\nu=1$)  should be compared  with the results   
on the monochromatic inverse scattering problem  obtained by H\"ahner, Hohage \cite{HH2001}, Isaev,  Novikov  \cite[Theorem 1.2]{IN2013++} and Hohage, Weidling~\cite{HW2017} under the assumption that 
  $v$ is a compactly supported sufficiently regular function on $\Reals^3$.
 More precisely, for  this case, 
estimate   \eqref{eq:cor1} with 
  $m>3$ and $\beta_1 = \dfrac{m-3}{3}$  is similar to   \cite[Theorem 1.2]{IN2013++};
 estimate  \eqref{eq:cor2} with $s=0$, $m>\frac32$, and $\beta_2 = \dfrac{m}{m+3}$ is similar to \cite[Theorem 1.2]{HH2001};
  estimates   \eqref{log-2},  \eqref{eq:cor2} with $m>7/2$ and some appropriate $\beta_2 \in (0,1)$ are similar to  \cite[Corollary 1.4]{HW2017}.     
For other known results on logarithmic and  H\"older-logarithmic stability  in inverse problems,  see  also
Alessandrini~\cite{Alessandrini1988}, Bao et al.~\cite{BLT2010},
 Isaev~\cite{Isaev2013+},  Isakov~\cite{Isakov2011}, 
Novikov~\cite{Novikov2011}, Santacesaria~\cite{Santacesaria2015}  and references  therein.

As observed above,      
logarithmic  and H\"older-logarithmic stability   
 was  established  
 for many different inverse problems. 
 However, to our knowledge, even for the compactly supported case,   the estimates of 
   Theorem~\ref{Theorem1}, Theorem~\ref{Theorem2} (with  $\alpha<1$) and Corollary~\ref{Corollary}   are implied by none of  results given in the literature 
   %(including the aforementionned results of \cite{HH2001,IN2013++, HW2017}, which are the most related)
   before the recent work  \cite{IN2020}.  The related results of \cite{IN2020} are essentially equivalent  to  the special case of \eqref{eq_Theorem2},  \eqref{log-2}, \eqref{eq:cor2} when    $v \in \calH^m(\Reals^d)$ is  compactly supported,  
$m$ is a positive integer, and $s=0$.

\section{Examples of exponential instability for  Problem~\ref{Problem1}}\label{S:instability}
  
  First, we recall the results from \cite[Section 6]{IN2020}. Let  $A$ and $B$ be open bounded domains in $\Reals^d$, $d\geq 1$. Then, for any fixed positive integer $m$  and positive $\gamma$, we give 
   examples of real-valued functions $v_n \in  C^m(\Reals^d)$  such that 
%   \begin{itemize}
%   	\item $	\supp(v_n)  \subseteq  A$,
%   	\item   $\|v_n\|_{C^m(\Reals^d)} \leq \gamma$,
%   \end{itemize}
    \begin{equation}\label{ass_compact}
	\supp(v_n)  \subseteq  A, \qquad  \qquad      \|v_n\|_{C^m(\Reals^d)} \leq \gamma,
%\|\calF v_n \|_{\calL^\infty(B)}< 1,
%	 \nonumber  \\
%	 \|v\|_{\calL^{2}(\Reals^d)} &> c\left( \ln  \frac{1}{\|\calF v \|_{\calL^\infty(B)}}\right)^{-\mu}. \label{T:est}
\end{equation}
and the following asymptotics hold as $n \rightarrow +\infty$:
\begin{align*}
	 \left\|\calF  v_n  \right\|_{\calL^\infty(B )} &= O(e^{-n}), 
	 \qquad 	\|v_n\|_{\calL^{\infty}(\Reals^d)} =  \Omega(n^{-m}),
  \\
 	 \|v_n\|_{\calL^{2}(\Reals^d)}&=  
 	 \begin{cases}
 	 	\Omega(n^{-m}), & \text{for $d\geq2$,}
 	 	\\
 	 	 \Omega(n^{-m-\frac12}), & \text{for $d=1$.}
 	 \end{cases}
\end{align*}
Recall that 
for two sequences of real numbers $a_n$ and $b_n$, we say %$a_n=\omega(b_n)$ or 
$a_n=\Omega(b_n)$  if $a_n>0$ always and $b_n = O(a_n)$.
It follows from the above that,  
for any $\beta_1>m$ and   any constant $c_1>0$,  
\begin{equation}\label{instable:inf}
	\|v_n \|_{\calL^\infty(\mathbb{R}^d)} > c_1 \left( \ln \left(3+ \left\|\mathcal{F} v_n  \right\|_{\calL^\infty(B_r)}^{-1} \right)\right)^{-\beta_1}, 
\end{equation}
  when $n$ is sufficiently large.
 Similarly, for any $\beta_2$, where $\beta_2>m$ if $d\geq2$  and 
 $\beta_2>m+\frac12$ if  $d=1$,   and for any   constant $c_2>0$,  we have that 
 \begin{equation}\label{instable:L2}
 	\|v_n \|_{\calL^2(\mathbb{R}^d)} > 
 	c_2 \left( \ln \left(3+ \left\|\mathcal{F} v_n \right\|_{\calL^\infty(B_r)}^{-1} \right)
 	\right)^{-\beta_2},
\end{equation}
  when $n$ is sufficiently large.

Condition \eqref{ass_compact} and instability estimates   \eqref{instable:inf} and \eqref{instable:L2}
show an optimality (or nearly optimality) of the exponent $\beta_1$
in  stability estimates  \eqref{eq_Theorem1}, \eqref{log-1}, \eqref{eq:cor1}
 and of the exponent $\beta_2$ in  stability estimates 
  \eqref{eq_Theorem2}, \eqref{log-2}, \eqref{eq:cor2}  with $s=0$.   
  %  for $\nu=1$  (which includes  the compactly supported  case).
%  Recall  the definitions of  $\beta_1^{\max}$ and $\beta_2^{\max}$ given in    \eqref{b-max}.
Recall that    Theorem~\ref{Theorem1}, Theorem~\ref{Theorem2}, and Corollary~\ref{Corollary}  require $\beta_1< \beta_1^{\max}$, $\beta_2<\beta_2^{\max}$, where 
  $\beta_1^{\max}$ and $\beta_2^{\max}$ are defined in    \eqref{b-max}. In particular, for $\nu=1$ (which includes  the compactly supported  case), we have that 
    $\beta_1^{\max} = m-d$ and  $\beta_2^{\max} =m$ (for $s=0$), which are  close to the infima of the exponents $\beta_1$ and $\beta_2$    in \eqref{instable:inf} and \eqref{instable:L2}.

    However, $\beta_1^{\max}(\nu)$ and 
 $\beta_2^{\max}(\nu)$ decrease to $0$ as $\nu\rightarrow +\infty$.     In particular,  for $\nu$ noticeable greater than $1$,
the instability behaviour exhibiting by  the functions $v_n$ recalled above
 become much less tight with respect to $ \beta_1^{\max}(\nu)$ and $\beta_2^{\max}(\nu)$. 
 This motivates us  to construct  other explicit examples of exponential instability for 
 Problem~\ref{Problem1}, which are  non-compactly supported  and  provide   considerably smaller exponents $\beta_1$ and $\beta_2$   in the instability estimates in comparison with   $v_n$.

	For $k \in \Reals^d$,  integer  $m>0$, and real $\varepsilon >0$, consider the functions $	v_{k, m, \varepsilon}$ defined by
\begin{equation}\label{vkmeps}
	v_{k, m, \varepsilon} (x) :=  \varepsilon  |k|^{-m}e^{-x^2/2}  \cos(kx), \qquad  x\in \Reals^d. 
\end{equation}
Note that 
\[
	\calF v_{k, m, \varepsilon}  (\xi) =    \dfrac12  (2\pi)^{d/2}  \varepsilon  |k|^{-m} 
	 \left(e^{-(\xi -k)^2 /2} + e^{-(\xi  + k)^2/2 } \right),   \qquad  \xi\in \Reals^d. 
	\]
Similarly to  Remark \ref{Rem1}, we find that  the functions $v_{k, m, \varepsilon}$ satisfy  \eqref{eq:ass} for 
$
	\nu  =2,
$
 any $\sigma>\dfrac12$,  and 
  $N =   \varepsilon  |k|^{-m} N'(d,\sigma)$.
Then, for  any fixed $\sigma>\dfrac12$, 
$r>0$, integer $m>0$, and real  $\gamma_0, \gamma_1, \gamma_2>0$, we have that
\begin{equation}\label{Nnorms}
   N \leq \gamma_0, \qquad \| v_{k, m, \varepsilon}\|_{\calW^m(\Reals^d)} \leq \gamma_1,\qquad 
    \| v_{k, m, \varepsilon}\|_{\calH^m(\Reals^d)} \leq \gamma_2;
\end{equation}
 for all sufficiently small $\varepsilon>0$ and $|k|>1$;
 and, for fixed $\varepsilon>0$, 
  the following formulas hold as $|k|\rightarrow +\infty$:
 \begin{equation}\label{newestimates}
 \begin{aligned}
    \|\calF v_{k, m, \varepsilon}  \|_{\calL^\infty(B_r)} &= O\left(\exp(-\alpha|k|^2)\right), 
    \text{ for any $\alpha \in (0,\dfrac12)$,}\\
    \|v_{k, m, \varepsilon}\|_{\calL^\infty(\Reals^d)}  &= \varepsilon |k|^{-m}, 
  \qquad   \|v_{k, m, \varepsilon}\|_{\calL^2 (\Reals^d)}   = \Omega( |k|^{-m}).
 \end{aligned}
 \end{equation}
 It follows from \eqref{newestimates} that, for  fixed scaling parameter $\varepsilon$, 
  exponents $\beta_1,\beta_2>m/2$, and constants $c_1,c_2>0$,
 \begin{align}
	\|v_{k,m,\varepsilon}\|_{\calL^\infty(\mathbb{R}^d)} &> c_1 \left( \ln \left(3+ \left\|\mathcal{F} v_{k,m,\varepsilon} \right\|_{\calL^\infty(B_r)}^{-1} \right)\right)^{-\beta_1},
	\label{last1}
	 \\
		\|v_{k,m,\varepsilon}\|_{\calL^2(\mathbb{R}^d)}&> c_2 \left( \ln \left(3+ \left\|\mathcal{F} v_{k,m,\varepsilon} \right\|_{\calL^\infty(B_r)}^{-1} \right)\right)^{-\beta_2},\label{last2}
 \end{align}
 when $|k|$ is sufficiently large. 
 Condition 
    \eqref{Nnorms} and  instability estimates \eqref{last1}, \eqref{last2} 
show   nearly optimality of the exponent $\beta_1$
in  stability estimates  \eqref{eq_Theorem1}, \eqref{log-1}, \eqref{eq:cor1}
 and of the exponent $\beta_2$ in  stability estimates 
  \eqref{eq_Theorem2}, \eqref{log-2}, \eqref{eq:cor2}  with $s=0$,
  for the case when $\nu=2$.
  Namely, for this case,
   \[
    \beta_1^{\max}(2)  = \left(1- \dfrac{1}{\sqrt2}\right) (m-d)  \qquad  \text{ and } \qquad
    \beta_2^{\max}(2) = \left(1- \dfrac{1}{\sqrt2}\right)  m    \text{ (for $s=0$)}. 
    \]
  One can see that $m/2$, which is the infima of the exponents $\beta_1$ and $\beta_2$    in \eqref{last1} and \eqref{last2},  is substantially closer to   
   $\beta_1^{\max}(2)$ and $\beta_2^{\max}(2)$  than  the infima   of the exponents $\beta_1$ and $\beta_2$    in \eqref{instable:inf} and \eqref{instable:L2},  respectively.
   
   It is also important to note that the instability behaviour exhibiting by the functions 
   $v_{k,m,\varepsilon}$  defined in  \eqref{vkmeps} is impossible for  the compactly supported case, at least for sufficiently large $m$.  
   This is because $\beta_1^{\max}(1)$ and $\beta_2^{\max}(1)$ (for $s=0$)
    get bigger  than $m/2$  so    \eqref{last1} and \eqref{last2}  would contradict to Corollary \ref{Corollary}.

\section{Stability estimates for Problem \ref{Problem2}}\label{S:con}
%In this section, we give stability estimates for continuations   $\mathcal{C}_{R,n}$ defined according to \eqref{def_C}; see Lemma \ref{Lemma_C}, Theorem \ref{Theorem3}, and Corollary \ref{Corollary_C}. These estimates  require only  the assumptions of  \eqref{eq:ass}.
%and its proof consists in optimising the parameter $n$  in the bound of the lemma below. 

%
%\begin{Lemma}\label{Lemma_C}
%Let $r, \rho,R>0$ be such that $R\geq r$, $\rho \geq 4R/r$.
%Let  $v$ be such that $Q(r \rho/2)<+\infty$, where $Q(\cdot)$ is defined in \eqref{ass:eq}.  \red{Suppose that  $\|w - \calF v\|_{\calL^\infty(B_r)} \leq \delta$ for some
%function~$w$  and $\delta>0$.}  Then,     for  any $n \in \mathbb{N}$,  the following estimate holds:
%	\begin{equation*}\label{eq:Lemma_C}
%		\|\mathcal{F}v - \mathcal{C}_{R,n}[w] \|_{\calL^{\infty}(B_R)} 
%		\leq 2 \left(\dfrac{2R}{r}\right)^n \delta  + 4\,Q (r \rho/2) \left(\dfrac{2R}{r\rho}\right)^{n}.
%	\end{equation*}
%\end{Lemma}

\begin{Lemma}\label{Lemma_C}
 Let $v\in \calL^1(\Reals^d)$ and $\lambda, r, R>0$ be such that $r\leq R \leq \lambda/2$,  and $Q_v(\lambda)<+\infty$. 
 %\red{, where $Q_v(\cdot)$ is defined in \eqref{ass:eq}.}  %Suppose that  $\|w - \calF v\|_{\calL^\infty(B_r)} \leq \delta$ for some
%function~$w$  and $\delta>0$. 
  Then,     for  any
 $w\in \calL^\infty(B_r)$ and  $n \in \mathbb{N}$, the following estimate holds:
	\begin{equation*}\label{eq:Lemma_C}
		\|\mathcal{F}v - \mathcal{C}_{R,n}[w] \|_{\calL^{\infty}(B_R)} 
		\leq 2 \left(\dfrac{2R}{r}\right)^n \|w - \calF v\|_{\calL^\infty(B_r)}  + 4\,Q_v (\lambda) \left(\dfrac{R}{\lambda}\right)^{n}.
	\end{equation*}
\end{Lemma}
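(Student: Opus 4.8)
The plan is to reduce the $d$-dimensional statement to a one-dimensional Chebyshev extrapolation along each ray and to control the two sources of error separately. Fix $\theta\in S^{d-1}$ and set $\delta:=\|w-\calF v\|_{\calL^\infty(B_r)}$ together with the radial profiles $g_\theta(s):=\calF v(rs\theta)$ and $h_\theta(s):=w(rs\theta)$. After the substitution $t=rs$, the coefficients $a_k(\theta)=a_k[w](\theta)$ of \eqref{def_a} are exactly the Chebyshev coefficients of $h_\theta$ on $[-1,1]$; write $b_k(\theta)$ for the Chebyshev coefficients of $g_\theta$. On $B_r$ the operator $\calC_{R,n}$ returns $w$, so there the error is at most $\delta$, which is dominated by the first term. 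For $\xi=\rho\theta$ with $r\le\rho\le R$, I would split
\[
\calF v(\xi)-[\calC_{R,n}w](\xi)=\sum_{k=n}^{\infty}b_k(\theta)\,T_k(\rho/r)+\sum_{k=0}^{n-1}\bigl(b_k(\theta)-a_k(\theta)\bigr)\,T_k(\rho/r),
\]
the first (truncation) sum being the tail of the Chebyshev expansion of the true profile and the second (noise) sum being the propagation of the data error.

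For the noise sum I would use $|b_k-a_k|\le 2\delta$, which follows from \eqref{def_a} and $\int_{-1}^{1}(1-s^2)^{-1/2}ds=\pi$, together with the elementary estimate $|T_k(x)|\le(2x)^k$ for $x\ge1$. Since $\rho/r\le R/r$ and $2R/r\ge2$, the resulting geometric sum $2\delta\sum_{k=0}^{n-1}(2R/r)^k$ is bounded by $2(2R/r)^n\delta$, giving the first term.

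The heart of the proof is the estimate for the coefficients $b_k(\theta)$. Writing $g_\theta(\cos\phi)=(2\pi)^{-d}\int e^{ir\cos\phi\,(\theta\cdot x)}v(x)\,dx$ and inserting the Jacobi--Anger expansion $e^{iz\cos\phi}=J_0(z)+2\sum_{k\ge1}i^kJ_k(z)\cos(k\phi)$ (the interchange of sum and integrals being justified by dominated convergence, since $\sum_k|J_k(z)|\le 2e^{|z|/2}$ and $Q_v(r/2)<\infty$ by \eqref{eq:ass}), I obtain $b_k(\theta)=2i^k(2\pi)^{-d}\int J_k(r\,\theta\cdot x)\,v(x)\,dx$. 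The standard Bessel bound $|J_k(y)|\le(|y|/2)^k/k!$ (from Poisson's integral representation) combined with the moment estimate $(2\pi)^{-d}\int|x|^k|v(x)|\,dx\le k!\,\lambda^{-k}Q_v(\lambda)$, which comes from $e^t\ge t^k/k!$ applied in \eqref{eq:ass}, then yields the clean decay $|b_k(\theta)|\le 2Q_v(\lambda)(r/(2\lambda))^k$. With $|T_k(\rho/r)|\le(2R/r)^k$ the truncation sum is at most $2Q_v(\lambda)\sum_{k\ge n}(R/\lambda)^k$, and because $R\le\lambda/2$ this geometric tail is bounded by $4Q_v(\lambda)(R/\lambda)^n$, the second term.

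The point that needs care, and which I expect to be the main obstacle beyond the coefficient estimate, is justifying that $\sum_{k=0}^{\infty}b_k(\theta)T_k(\rho/r)$ really represents $g_\theta(\rho/r)$ for $\rho/r\in[1,R/r]$, i.e.\ outside $[-1,1]$ where Chebyshev convergence is automatic. Here I would argue that the coefficient decay makes the series converge uniformly on $[-R/r,R/r]$ to a function analytic in a Bernstein-ellipse neighborhood, that $g_\theta$ itself extends analytically to the strip $|\operatorname{Im}s|<\lambda/r$ containing $[1,R/r]$ (differentiating under the integral sign, using $Q_v(\lambda)<\infty$), and that the two analytic functions agree on $[-1,1]$; the identity theorem then forces agreement on all of $[1,R/r]$. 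Combining the three estimates and taking the supremum over $\theta\in S^{d-1}$ and $\rho\le R$ gives the claim.
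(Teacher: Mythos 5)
Your proof is correct, and its overall architecture (reduction to rays $\xi=\rho\theta$, splitting the error into a truncation tail for the true profile plus a noise-propagation sum, and the estimate $2\delta\sum_{k=0}^{n-1}(2R/r)^k\le 2(2R/r)^n\delta$ for the latter) coincides with the paper's. Where you genuinely diverge is the bound on the Chebyshev coefficients $b_k(\theta)$ of $\calF v(r s\theta)$. The paper proves a general one-dimensional statement (Lemma \ref{Lemma_1D}): for any function holomorphic and bounded by $M_\rho$ on the Bernstein ellipse $D(\rho)$, contour shifting of the Fourier integral for $g(z)=f(\cos z)$ gives $|b_k|\le 2M_\rho\rho^{-k}$; the Fourier-transform structure enters only through the sup bound $M_\rho=Q_v(\lambda)$ on $D(2\lambda/r)$. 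You instead exploit that structure directly: the Jacobi--Anger expansion identifies $b_k(\theta)$ as $\epsilon_k i^k(2\pi)^{-d}\int J_k(r\,\theta\cdot x)v(x)\,dx$, and the Bessel bound $|J_k(y)|\le (|y|/2)^k/k!$ together with the moment estimate $\int |x|^k|v|\,dx\le k!\lambda^{-k}(2\pi)^dQ_v(\lambda)$ gives the identical decay $|b_k|\le 2Q_v(\lambda)(r/(2\lambda))^k$, hence the same constants in the final estimate. The trade-off: the paper's route is more general and gets the validity of the expansion on all of $D(\rho)$ as part of the same complex-analytic argument, whereas your route is more elementary and explicit at the coefficient level but then needs the separate identity-theorem step to show the series actually represents $\calF v(rs\theta)$ for $s\in[1,R/r]$ -- so complex analysis is deferred rather than avoided. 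That step is handled correctly in your sketch (both functions are analytic on a connected neighborhood of $[-R/r,R/r]$ and agree on $[-1,1]$). Two small points to tidy up: the lemma assumes only $Q_v(\lambda)<+\infty$, not \eqref{eq:ass}, so justify $Q_v(r/2)<\infty$ by monotonicity of $Q_v$ and $r/2<\lambda$; and note the $k=0$ term of Jacobi--Anger carries the factor $1$ rather than $2$, which only helps.
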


Lemma  \ref{Lemma_C} is proved  in Section \ref{S:Proof-Lemma}. 
%We proceed to our stability estimate for  Problem \ref{Problem2}.
Optimising the parameter $n$ in Lemma \ref{Lemma_C}, we obtain the following H\"older stability estimate for Problem \ref{Problem2}.

\begin{Theorem}\label{Theorem3}
 Let $v\in \calL^1(\Reals^d)$ and $\lambda, r, R>0$ be such that $r\leq R \leq \lambda/2$,  and $Q_v(\lambda)<+\infty$. 
 %\red{, where $Q_v(\cdot)$ is defined in \eqref{ass:eq}.}  
 Suppose that  $\|w - \calF v\|_{\calL^\infty(B_r)} \leq \delta$ for some
function~$w$ and  $0<\delta <  Q_v (\lambda)$. 
Then  the following estimate holds:
\begin{equation}\label{eq:Theorem3}
		\|\mathcal{F}v - \mathcal{C}_{R,n^*}w \|_{\calL^{\infty}(B_{R})} 
		\leq \dfrac{8R}{r} \left( \dfrac{Q_v(\lambda)}{\delta}\right)^{\tau(\lambda)} \delta,
	\end{equation}
	where 
		\begin{equation}\label{def:ntau}
		\begin{aligned}
		n^*:=  
		 \left\lceil \frac{ \ln  \left( \frac{Q_v(\lambda)}{\delta}\right) }{ \ln (2\lambda/r)}\right\rceil   
		 \qquad \text{and} \qquad 
		\tau(\lambda)   := \frac{\ln(2R/r)}{\ln (2\lambda/r) }.
		\end{aligned}
	\end{equation}
\end{Theorem}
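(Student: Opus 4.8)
The plan is to start from Lemma~\ref{Lemma_C} and simply optimise the free parameter $n$. Inserting the noise bound $\|w-\calF v\|_{\calL^\infty(B_r)}\le\delta$ into the estimate of Lemma~\ref{Lemma_C} gives, for every $n\in\mathbb{N}$,
\[
 \|\calF v-\calC_{R,n}w\|_{\calL^\infty(B_R)}\le 2\left(\frac{2R}{r}\right)^n\delta+4\,Q_v(\lambda)\left(\frac{R}{\lambda}\right)^n.
\]
Write $Q:=Q_v(\lambda)$. The first term grows in $n$ (since $2R/r\ge2$) while the second decays (since $R/\lambda\le\tfrac12$), so there is a balancing value of $n$, and my aim is to show that the integer choice $n=n^*$ in \eqref{def:ntau} is essentially this balance point and produces the H\"older exponent $\tau(\lambda)$.

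The key observation is that both growth factors are powers of the single base $2\lambda/r$. Directly from the definition $\tau=\tau(\lambda)=\ln(2R/r)/\ln(2\lambda/r)$ one checks the exact identities
\[
 \frac{2R}{r}=\left(\frac{2\lambda}{r}\right)^{\tau},\qquad \frac{R}{\lambda}=\left(\frac{2\lambda}{r}\right)^{\tau-1}.
\]
Here $\tau\in(0,1)$: the hypotheses $r\le R\le\lambda/2$ give $1<2R/r\le\lambda/r<2\lambda/r$, so the logarithm ratio lies strictly between $0$ and $1$. Setting $P:=(2\lambda/r)^{n^*}$, the two terms become $2P^{\tau}\delta$ and $4QP^{\tau-1}$, with exponents of opposite sign.

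Next I would read off the ceiling bounds. Since $0<\delta<Q$ we have $Q/\delta>1$, hence $n^*\ge1$, and by the definition of the ceiling
\[
 \frac{Q}{\delta}\le P<\frac{2\lambda}{r}\cdot\frac{Q}{\delta}.
\]
For the first (data) term the exponent $\tau$ is positive, so I use the \emph{upper} bound on $P$: $P^{\tau}<((2\lambda/r)(Q/\delta))^{\tau}=(2R/r)(Q/\delta)^{\tau}$, giving $2P^{\tau}\delta<\tfrac{4R}{r}(Q/\delta)^{\tau}\delta$. For the second (tail) term the exponent $\tau-1$ is negative, so I use the \emph{lower} bound on $P$: $P^{\tau-1}\le(Q/\delta)^{\tau-1}$, and with $Q(Q/\delta)^{\tau-1}=\delta(Q/\delta)^{\tau}$ together with $R/r\ge1$ this yields $4QP^{\tau-1}\le 4\delta(Q/\delta)^{\tau}\le\tfrac{4R}{r}(Q/\delta)^{\tau}\delta$. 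Adding the two bounds produces exactly $\tfrac{8R}{r}(Q/\delta)^{\tau}\delta$, which is \eqref{eq:Theorem3}.

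The only real subtlety, and the step where a careless estimate costs the sharp constant, is matching each error term with the correct one-sided ceiling bound: the increasing term must be controlled by the overshoot bound $P<(2\lambda/r)(Q/\delta)$, while the decreasing term must be controlled by the undershoot bound $P\ge Q/\delta$. If instead one rewrites the tail factor $(R/\lambda)^{n^*}$ as $(2R/r)^{n^*}/(2\lambda/r)^{n^*}$ and reuses the already-inflated bound on $(2R/r)^{n^*}$, one loses a factor and obtains a worse constant ($12R/r$ rather than $8R/r$); using the exact identity $R/\lambda=(2\lambda/r)^{\tau-1}$ directly avoids this. The hypotheses $r\le R\le\lambda/2$ and $Q_v(\lambda)<+\infty$ are inherited from Lemma~\ref{Lemma_C}, and the restriction $0<\delta<Q_v(\lambda)$ is exactly what guarantees $n^*\ge1$ and $Q/\delta>1$.
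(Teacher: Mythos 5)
Your proof is correct and follows essentially the same route as the paper: apply Lemma~\ref{Lemma_C} with $n=n^*$, use the two-sided ceiling bounds $\eta\le n^*<\eta+1$ around the balance point $\eta=\ln(Q_v(\lambda)/\delta)/\ln(2\lambda/r)$, and convert $(2R/r)^{\eta}$ into $(Q_v(\lambda)/\delta)^{\tau(\lambda)}$. The paper organizes the algebra by showing the tail term is dominated by the data term and bounding the sum by $4(2R/r)^{\eta+1}\delta$, whereas you bound each term separately by $\tfrac{4R}{r}(Q_v(\lambda)/\delta)^{\tau(\lambda)}\delta$ via the substitution $P=(2\lambda/r)^{n^*}$; this is only a cosmetic difference.
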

%
%\noindent
%Note that $n^*\geq 0$ under the assumptions of Theorem \ref{Theorem3} since \red{
% \[ 
%  \ln \left( \dfrac{N }{2 \delta}  e^{r\sigma \rho/2} \right)  + \ln \rho  \geq  
%  \ln \left(\dfrac{(2\pi)^d}{2}\right) + \ln 4 >d \ln(2\pi).
%  \]}
\begin{Remark}
 Problem \ref{Problem2}   is a particular case of the problem of stable analytic continuation; see, for example,  Demanet, Townsend \cite{DT2019}, Lavrent'ev et al.  \cite[Chapter 3]{LRS1986},  Tuan~\cite{Tuan2000},
 and Vessella \cite{Vessella1999}. 
  In particular, 
 \cite[Theorem 1.2]{DT2019}  or  \cite[Theorem 1]{Vessella1999} 
 lead to a   H\"older stability  estimate similar to \eqref{eq:Theorem3}.
 In the  present work, we independently establish estimate~\eqref{eq:Theorem3}   mainly for the purpose to  give a simple explicit expression for the factor in front of the H\"older term $\delta^{1-\tau(\lambda)}$. 
 Besides, we derive our estimates  for  specific analytic functions which are the Fourier transforms of functions  satisfying \eqref{eq:ass}.
\end{Remark}
\begin{proof}[Proof of Theorem~\ref{Theorem3}]
By the assumptions, we have that
\[
 	\eta :=    \frac{ \ln  \left( \frac{Q_v(\lambda)}{\delta}\right) }{ \ln (2\lambda/r)}>0.
\] 
Note that $\eta$ is the solution of the equation
\[
	  \left(\dfrac{2R}{r}\right)^\eta \delta   =  Q_v (\lambda) \left(\dfrac{R}{\lambda}\right)^{\eta}.
\]
%and  $\delta = N    e^{r \sigma \rho} (3\rho)^{-\eta}$.
Using also that $R\geq r$, we get
\begin{align*}
		 \left(\dfrac{2R}{r}\right)^{\eta+1}   \delta        
		  = 
		  \dfrac{  2R}{r   }   Q_v(\lambda) \left(\dfrac{ R}{ \lambda}\right)^{\eta}
		 \geq    2\,  Q_v(\lambda) \left(\dfrac{ R}{ \lambda}\right)^{\eta}.
%	\\
%	   \left(\dfrac{4R}{r}\right)^{\mu} &=  	 \rho^{\tau(\rho) \mu }= 
%	   \left( \left(\dfrac{3}{2}\right)^d \dfrac{N}{\delta}  e^{\frac34 r \sigma \rho} \right)^{\tau(\rho)}.
\end{align*}
%where we used  the assumptions $\delta \leq (2\pi)^{-d} Ne^{r\sigma\rho/2} $ and  $ R\geq r$ to derive the last inequality.
By definition of $n^*$ and $\eta$,  we find that $\eta \leq n^* < \eta+1$.
Then, applying Lemma \ref{Lemma_C}, we obtain that
\begin{equation*}
	\begin{aligned}
		\|\mathcal{F}v - \mathcal{C}_{R,n^*}[w] \|_{\calL^{\infty}(B_R)} 
		&\leq  2 \left(\dfrac{2R}{r}\right)^{n^*} \delta  + 4\,Q_v (\lambda) \left(\dfrac{R}{\lambda}\right)^{n^*}
	\\	&\leq   2\left(\dfrac{2R}{r}\right)^{\eta+1} \delta  + 4\,Q_v (\lambda) \left(\dfrac{R}{\lambda}\right)^{\eta}		 \\ & \leq     4\left(\dfrac{2R}{r}\right)^{\eta+1} \delta .
%		 \leq 
%		  \dfrac{2R}{r} \left( \left(\dfrac{3}{2}\right)^d \dfrac{N }{ \delta} e^{\frac34 r\sigma \rho} \right)^{\tau(\rho)} 4^d \delta.
	\end{aligned}
\end{equation*}
Then, by the definitions of $\tau(\lambda)$ and $\eta$,  we get 
\[
		   \left(\dfrac{2R}{r}\right)^{\eta}  =  
		   \exp\left(\eta \ln\dfrac{2R}{r} \right)  =
%		   	  \left(\left(\dfrac{2\lambda}{r}\right)^{ \eta}\right)^{\tau(\lambda) } \delta= 
   \exp\left( \eta\, \tau(\lambda) \ln\dfrac{2\lambda}{r} \right)  =
	   \left(\dfrac{Q_v(\lambda)}{\delta}\right)^{\tau(\lambda)}.
\]
Combining the above formulas completes the proof.
\end{proof}
%%%%%%%%%%%%%%%%%%%%%%%%%%%%%%%%%%%%%%%%%%%%%%%%%%%%%%%%%%%%%

Theorem \ref{Theorem3} leads to the following stability estimate for the
 extrapolation   $\calC^*_{\tau,\delta} $ used in   Theorem \ref{Theorem2}. 
%Recall the definitions of $ L_{\tau}(\delta)$, $n_\tau(\delta)$ and $R_\tau(\delta)$ given in \eqref{def_L} and \eqref{def_R}.
\begin{Corollary}\label{Corollary_C}
 Let the assumptions of \eqref{eq:ass} and \eqref{eq:ass2}  hold for some $N,\sigma, r, \delta>0$ and $\nu\geq 1$.  
%Suppose that $\delta$ is such  that 
%\begin{equation*}
%	 L_{\tau}(\delta) =\left(\frac{(1-\tau) \ln (1+\delta^{-1})}{r\sigma}\right)^\tau \geq 4,
%\end{equation*}
%where  $L_{\tau}(\delta) $ is defined by  \eqref{def_L}. 
Then, for any $\tau \in [0, \nu^{-1}]$,
% $\alpha \in [0,1] $, 
 we have
 	\begin{equation*}
		\|\mathcal{F}v - \calC^*_{\tau,\delta} [w] \|_{\calL^{\infty}\left( B_{R_\tau(\delta)}\right)}
		\leq    8 L_{\tau}(\delta)   \left(\dfrac{N}{\delta}\right)^{\nu\tau(2-\tau) } \delta
=   8 L_{\tau}(\delta)  N^{1-\alpha}\delta^\alpha,
	%	\leq  16 L_{\alpha}(\delta)   N  \left(\dfrac{\delta}{N}\right)^{\alpha},
	\end{equation*}
%	where 
%	${\mathcal C}^*$  denotes ${\mathcal C}_{R,n}$ of \eqref{def_C} with 
%	\begin{equation}
%		\begin{aligned}
%		&R = R^* = \frac{r}{2}\, L_{r,\sigma,\tau},\\
%		&n = n^* = \left\lceil\frac{\tau \ln \left( \frac{2}{\pi}N\delta^{-1}(1+\delta^{-1})^{1-\tau} \right)}{ \ln L_{r,\sigma,\tau}}\right\rceil.
%		\end{aligned}
%	\end{equation}
where $L_{\tau}(\delta)$ and $R_\tau (\delta)$  are defined in \eqref{def_L} and \eqref{def_R} and $\alpha= \alpha(\tau) := 1- \nu \tau(2-\tau)$. In addition, the exponent $\alpha$
is positive if and only if   $0\leq \tau< 1 - \sqrt{1- \nu^{-1}}\leq \nu^{-1}$.
\end{Corollary}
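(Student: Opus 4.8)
The plan is to derive the estimate by re-running the argument of Theorem~\ref{Theorem3} with a specific analyticity radius $\lambda$ and with the a priori bound \eqref{eq:ass} substituted for the exact quantity $Q_v(\lambda)$. First I would dispose of the degenerate case $L_\tau(\delta)=1$ (which occurs in particular when $\tau=0$): then $R_\tau(\delta)=r$, the annulus $B_{R_\tau(\delta)}\setminus B_r$ is empty, and $\calC^*_{\tau,\delta}w=w$ on $B_r=B_{R_\tau(\delta)}$, so the left-hand side is at most $\delta$ by \eqref{eq:ass2}. Since $\delta<N$ and $\alpha\le 1$, one has $N^{1-\alpha}\delta^{\alpha}=\delta\,(N/\delta)^{1-\alpha}\ge\delta$, whence the claimed bound $8L_\tau(\delta)N^{1-\alpha}\delta^{\alpha}\ge 8\delta$ holds trivially.

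For the main case $L_\tau(\delta)>1$ (so $\tau>0$ and $R_\tau(\delta)>r$) I would choose
\[
	\lambda:=\left(\frac{(1-\tau)\ln(N/\delta)}{\sigma}\right)^{1/\nu},
	\qquad\text{i.e.}\qquad
	\sigma\lambda^{\nu}=(1-\tau)\ln(N/\delta),
\]
so that \eqref{eq:ass} yields $Q_v(\lambda)\le N\exp(\sigma\lambda^{\nu})=N^{2-\tau}\delta^{-(1-\tau)}=:\widetilde Q$. Writing $X:=\frac{(1-\tau)\ln(N/\delta)}{\sigma r^{\nu}}$, one has $\lambda=rX^{1/\nu}$ and, in this case, $2L_\tau(\delta)=X^{\tau}$. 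A short check then confirms the hypotheses of Theorem~\ref{Theorem3}: the inequality $R_\tau(\delta)\le\lambda/2$ reduces to $X^{\tau-1/\nu}\le 1$, which holds because $\tau\le\nu^{-1}$ and $X>1$; moreover $r\le R_\tau(\delta)$ and $\delta<N\le\widetilde Q$.

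The key point is that the proof of Theorem~\ref{Theorem3} invokes $Q_v(\lambda)$ only through the monotone right-hand side of Lemma~\ref{Lemma_C}, so it applies verbatim with $\widetilde Q$ in place of $Q_v(\lambda)$; the associated optimal index becomes $\lceil\ln(\widetilde Q/\delta)/\ln(2\lambda/r)\rceil$. I would then check that this is exactly $n_\tau(\delta)$: indeed $\ln(\widetilde Q/\delta)=(2-\tau)\ln(N/\delta)$ is the numerator in \eqref{def_R}, while $\ln(2\lambda/r)=\ln 2+\tfrac1\nu\ln X=\ln 2+\tfrac1{\tau\nu}\ln(2L_\tau(\delta))$ is its denominator. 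Hence the prescribed transform $\calC^*_{\tau,\delta}=\calC_{R_\tau(\delta),n_\tau(\delta)}$ is precisely the one produced by the argument, and it gives
\[
	\|\calF v-\calC^*_{\tau,\delta}w\|_{\calL^{\infty}(B_{R_\tau(\delta)})}
	\le\frac{8R_\tau(\delta)}{r}\left(\frac{\widetilde Q}{\delta}\right)^{\tau(\lambda)}\delta
	=8L_\tau(\delta)\left(\frac N\delta\right)^{(2-\tau)\tau(\lambda)}\delta,
\]
using $\widetilde Q/\delta=(N/\delta)^{2-\tau}$ and $R_\tau(\delta)/r=L_\tau(\delta)$.

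To finish I would control the Hölder exponent. From $\tau(\lambda)=\ln(2L_\tau(\delta))/\ln(2\lambda/r)=\tau\ln X/(\ln 2+\tfrac1\nu\ln X)$, the inequality $\tau(\lambda)\le\nu\tau$ is equivalent to $0\le\nu\tau\ln 2$ and so always holds; since $N/\delta>1$ this gives $(N/\delta)^{(2-\tau)\tau(\lambda)}\le(N/\delta)^{\nu\tau(2-\tau)}$, and rewriting $(N/\delta)^{\nu\tau(2-\tau)}\delta=N^{1-\alpha}\delta^{\alpha}$ with $\alpha=1-\nu\tau(2-\tau)$ yields the stated estimate. The concluding sign assertion is purely algebraic: $\tau(2-\tau)=1-(1-\tau)^2$, so $\alpha>0\iff(1-\tau)^2>1-\nu^{-1}\iff\tau<1-\sqrt{1-\nu^{-1}}$, and the inequality $1-\sqrt{1-\nu^{-1}}\le\nu^{-1}$ follows from $1-\nu^{-1}\le\sqrt{1-\nu^{-1}}$. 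I expect the main obstacle to be the bookkeeping that matches the optimal truncation index from Theorem~\ref{Theorem3} with the prescribed $n_\tau(\delta)$, and the attendant justification that the exact $Q_v(\lambda)$ may be replaced throughout by the a priori bound $\widetilde Q$.
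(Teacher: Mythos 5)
Your proposal is correct and follows essentially the same route as the paper: the same split into the degenerate case $L_\tau(\delta)=1$ and the main case, the same choice $\lambda=rX^{1/\nu}=r(2L_\tau(\delta))^{1/(\nu\tau)}$, the same invocation of Theorem~\ref{Theorem3} with $Q_v(\lambda)$ replaced by its a priori bound $\delta(N/\delta)^{2-\tau}$, and the same final replacement of $\tau(\lambda)$ by $\nu\tau$. Your explicit justification that the exact $Q_v(\lambda)$ may be swapped for the upper bound $\widetilde Q$ throughout (via monotonicity of the right-hand side of Lemma~\ref{Lemma_C}) is a point the paper passes over silently, but it is the same argument.
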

%Note that, for 
%$\alpha:= 1- \nu \tau (2-\tau)$,  
%\[
%	  \left(\dfrac{N}{\delta}\right)^{\nu\tau(2-\tau) } \delta  =  
%	  N^{1-\alpha}\delta^\alpha.
%\]
% The exponent $\alpha$ is positive if and only if $\tau<?<\nu^{-1}$.
%%%%%%%%%%%%%%%%%%%%%%%%%%%%%%%%%%%%%%%%%%%%%%%%%%%%%%%%%%%%%%%%%%%%%%%%%%%%%%%%%%%%%%%%%%%%%%%%%%%%%%%%%%%%
%%%%%%%%%%%%%%%%%%%%%%%%%%%%%%%%%%%%%%%%%%%%%%%%%%%%%%%%%%%%%%%%%%%%%%%%%%%%%%%%%%%%%%%%%%%%%%%%%%%%%%%%%%%%
\begin{proof} 
%By assumptions, we find that 
%\begin{equation*}%\label{cond_L}
% L_{r,\sigma,\tau}(\delta) = \left(\frac{(1-\tau) \ln (1+\delta^{-1})}{r\sigma}\right)^\tau.
%\end{equation*}
%%This implies  $ \delta <1$.
%First, we consider the case    $\tau=0$,  for which  $n_\tau(\delta) =0$, by  \eqref{def_R}. 
%\red{Using \eqref{eq:N-delta} and recalling from \eqref{eq:ass2} and \eqref{def_L}}   that
%$\delta<N$ and
% $L_{\tau}(\delta) \geq 1$, we get 
%\begin{align*}
%	\|\calF v - \calC^*_{\tau,\delta}w\|_{\calL^{\infty}\left([-R_\tau(\delta), R_\tau(\delta)]^d\right)} 
%	\leq \|\calF v\|_{\calL^{\infty}(\Reals^d)} \leq N 
%	 \leq  \left(\dfrac{16}{3}\right)^d N  \left(\dfrac{\delta}{N}\right)^{(1-\tau)^2}L_{\tau}(\delta).
%\end{align*}

First, we consider the case $L_{\tau}(\delta) = 1$.
Then,  \eqref{def_R} and  \eqref{def_C} imply that $R_\tau(\delta) = r$ and 
   $\calC^*_{\tau,\delta}[w] \equiv w$.
%
%
%First, we consider the case $L_{\tau}(\delta) = 1$, for which  $R_\tau(\delta) = r$. 
Recalling from \eqref{eq:ass2} that $\delta<N$  , we find that
\begin{align*}
	\|\calF v - \calC^*_{\tau,\delta}[w]\|_{\calL^{\infty}\left( B_{R_\tau(\delta)}\right)}
	=  \|\calF v  -w\|_{\calL^\infty(B_r)} \leq  \delta \leq   8 L_{\tau}(\delta)   \left(\dfrac{N}{\delta}\right)^{\nu\tau(2-\tau) } \delta.
\end{align*}
Next, suppose that 
\[	
L_{\tau}(\delta) = \dfrac12\left(\dfrac{(1-\tau) \ln \frac{N}{\delta}}{\sigma r^{\nu}}\right)^\tau>1.
\]
This is only possible when $\tau\neq 0$. Let
	\[
		\lambda := r(2L_{\tau}(\delta))^{\frac{1}{\nu \tau}}.
	\]
	Then, from  \eqref{eq:ass},  we get 
	\[
	Q_v(\lambda)
	\leq N \exp( \sigma \lambda^{\nu}) =  \delta \left(\dfrac{N}{\delta}\right)^{2-\tau}.
	\]
	 In addition, by the
	assumptions, 
	\begin{align*}
		R_\tau (\delta)\geq r  \qquad \text{and} \qquad \lambda \geq   r \left( 2L_{\tau}(\delta)\right) =   2R_\tau(\delta).  
			\end{align*} 
			Observe  that $n^*$  defined in \eqref{def:ntau} for   $\lambda=  r(2L_{\tau}(\delta))^{\frac{1}{\nu \tau}}$ 			 coincides with $n_\tau(\delta)$ defined in \eqref{def_R}.
	 Then, applying Theorem \ref{Theorem3}, we get that
		\begin{align*}
		\|\mathcal{F}v - \calC^*_{\tau,\delta} [w] \|_{\calL^{\infty}\left(  B_{R_\tau(\delta)}\right)} 
		\leq  8 L_{\tau}(\delta)  \left( \left(\dfrac{N}{\delta}\right)^{2-\tau}\right)^{\tau(\lambda)} \delta,
%		\\
%		&\leq
%		 16 L_{\tau}(\delta)  \left( \dfrac{N \exp(\sigma (r\rho)^{\nu})}{\delta}\right)^{\tau(\rho)}
		\end{align*}
		 where $\tau(\lambda)$ is defined in \eqref{def:ntau}.  
		 Note that  $\tau(\lambda)$ is  different from $\tau$. However, we can replace
$\tau(\lambda)$ by $\nu \tau$ in the estimate above
since  $ \delta < N$ and
 	\[
	\tau(\lambda)  = \frac{\ln(2R_\tau(\delta)/r)}{\ln (2\lambda/r)} = 
	\frac{\ln (2L_{\tau}(\delta))}
	{\ln 2 + \frac{1}{\nu \tau} \ln  (2L_{\tau}(\delta))} \leq \nu \tau.
	\]
%	Observe that $(1+t)^{\tau-\tau^2} \le 1+ t^{1-\tau}$ for any $t\geq 0$ since it is true for $t=0$ and
%	\[ 
%	\left((1+t)^{\tau-\tau^2}\right)' = 
%	\tau(1-\tau)(1+t)^{\tau-\tau^2 -1} \leq (1-\tau)(1+t)^{-\tau} \leq \left(1+ t^{1-\tau}\right)'.
%	\]
%	Recalling $\delta \leq N$, we get that 
%		\begin{align*}
%		(N (1+\delta^{-1})^{1-\tau})^{\tau} 
%		\leq  N^{\tau} (1+N)^{\tau-\tau^2}\delta^{\tau^2 -\tau} \leq
%		  (N + N^{\tau}) \delta^{\tau^2 -\tau}.
%		%\delta^{\tau^2 -\tau} (1+N)^{(2-\tau) \tau} \leq \delta^{\tau^2 -\tau} (1+N).
%%		  &\leq  \delta^{\tau -1} N^{\tau}
%%		  		  \left(N(N+1) \right)^{1-\tau}
%%		  		  \\
%%				&\leq 
%%				\delta^{\tau-2} \left(\frac{N}{(2\pi)^d} \right)^{\tau}
%%				 \left(\frac{N} {(2\pi)^d}\left(2+ \frac{N} {(2\pi)^d}\right)\right)^{1-\tau}
%%				 \\
%%				& \leq \delta^{\tau-2} \left(1+ \frac{2N}{(2\pi)^d} \right).
%	\end{align*}
	The required bound follows.
\end{proof}
%%%%%%%%%%%%%%%%%%%%%%%%%%%%%%%%%%%%%%%%%%%%%%%%%%%%%%%%%%%%%%%%%%%%%%%%%%%%%%%%%%%%%%%%%%%%%%%%%%%%%%%%%%%%
%%%%%%%%%%%%%%%%%%%%%%%%%%%%%%%%%%%%%%%%%%%%%%%%%%%%%%%%%%%%%%%%%%%%%%%%%%%%%%%%%%%%%%%%%%%% 
%By assumptions, we find that 
%\begin{equation*}%\label{cond_L}
% L_{r,\sigma,\tau}(\delta) = \left(\frac{(1-\tau) \ln (1+\delta^{-1})}{r\sigma}\right)^\tau.
%\end{equation*}
%%This implies  $ \delta <1$.
%%%%%%%%%%%%%%%%%%%%%%%%%%%%%%%%%%%%%%%%%%%%%%%%%%%%%%%%%%%%%%%%%%%%%%%%%%%%%%%%%%%%%%%%%%%%%%%%%%%%%%%%%%%%
%%%%%%%%%%%%%%%%%%%%%%%%%%%%%%%%%%%%%%%%%%%%%%%%%%%%%%%%%%%%%%%%%%%%%%%%%%%%%%%%%%%%%%%%%%%%%%%%%%%%%%%%%%%%
%%%%%%%%%%%%%%%%%%%%%%%%%%%%%%%%%%%%%%%%%%%%%%%%%%%%%%%%%%%%%%%%%%%%%%%%%%%%%%%%%%%%%%%%%%%%%%%%%%%%%%%%%%%%
%%%%%%%%%%%%%%%%%%%%%%%%%%%%%%%%%%%%%%%%%%%%%%%%%%%%%%%%%%%%%%%%%%%%%%%%%%%%%%%%%%%%%%%%%%%%%%%%%%%%%%%%%%%%

\section{ Proofs of  Theorem \ref{Theorem1} and Theorem \ref{Theorem2}}\label{S:proofs}

In this section,  we prove Theorem \ref{Theorem1}  and Theorem \ref{Theorem2}. Their proofs  are very similar. Starting from the inverse Fourier transform formula
\begin{equation*}
	v(x)  = \int\limits_{\mathbb{R}^d} e^{-i\xi x}\mathcal{F}{v}(\xi) d\xi, \ \ \ x\in \mathbb{R}^d, 
\end{equation*}
we analyse  the contributions of the two regions $B_{R_\tau(\delta)}$
 and $\Reals^d \setminus B_{R_\tau(\delta)}$. 
 For the first region, we apply Corollary \ref{Corollary_C}. For the second region 
 we use  the smoothness assumptions $\|v\|_{\calW^m(\Reals^d)}\leq \gamma_1$ 
 or  $\|v\|_{\calH^m(\Reals^d)}\leq \gamma_2$.

 Note that  $\alpha$ in estimates \eqref{eq_Theorem1}, \eqref{eq_Theorem2} is the same as in Corollary  \ref{Corollary_C}.
Indeed, as stated in Theorem \ref{Theorem1}  and Theorem \ref{Theorem2}, 
for a given $\alpha \in [0,1]$, we define
\[
	\tau = \tau(\alpha):= 1 -\sqrt{1-(1-\alpha)\nu^{-1}}. 
\]
Then, observe that $0\leq \tau \leq 1 - \sqrt{1- \nu^{-1}}\leq \nu^{-1}$ and 
\[
	1 - \nu \tau(2-\tau) = \alpha,
\]
as in Corollary  \ref{Corollary_C}.
%This observation shows that the parameter $\alpha$ defined Corollary \ref{Corollary_C}
%coincides  with the original $\alpha$.
% 

Recall also the definitions of $L_{\tau}(\delta)$, $R_\tau(\delta)$ and $c(d)$  from \eqref{def_L}, \eqref{def_R},  and \eqref{def_c}, respectively. 
It is straightforward to check that \eqref{log-1} and \eqref{log-2}  follow
from  \eqref{eq_Theorem1}  and 
\eqref{eq_Theorem2}, respectively. Indeed,  for  any $N,\sigma,r >0$,    $\nu\geq 1$,   
$\tau \in (0, 1-\sqrt{1-\nu^{-1}})$, we have that 
\[
	C_{1} \left( \ln (3+\delta^{-1})\right)^{\tau} \leq L_{\tau}(\delta) \leq C_{2} \left( \ln (3+\delta^{-1})\right)^{\tau}, \qquad \text{for   $0<\delta<N$,}
\]
where $C_{1}=C_1(N,\sigma,\nu, r, \tau)>0$ and $C_{1}=C_1(N,\sigma,\nu, r, \tau)>0$. 
Then, the second terms of the right-hand side on the estimates \eqref{eq_Theorem1}  and 
\eqref{eq_Theorem2} dominates the first terms as $\delta\rightarrow 0$. Observing also that 
$\ln (3+\delta^{-1}) \geq 1$ for all $\delta>0$, we can find some suitable constants $c_1$ and  $c_2$ such that  estimates  \eqref{log-1} and \eqref{log-2} always hold  (given the assumptions)
 with $\beta_1 = (m-d)\tau$ and $\beta_2=(m-s)\tau$. 
Thus, to complete the proofs of Theorems \ref{Theorem1}
and  \ref{Theorem2},  it  remains to establish stability estimates  \eqref{eq_Theorem1} and  \eqref{eq_Theorem2}. 

\subsection{ Proof of estimate  (\ref{eq_Theorem1})}\label{S:2.1}

Observe that
\begin{equation*}\label{4.2}
	\begin{aligned}
		\|v - \mathcal{F}^{-1} \calC^*_{\tau,\delta} w\|_{\calL^{\infty}(\mathbb{R}^d)}  \leq 
		\sup\limits_{x\in \mathbb{R}^d}\int\limits_{\mathbb{R}^d}  \left|
		e^{-i\xi x}\left(\mathcal{F}{v}(\xi) -  \calC^*_{\tau,\delta} w(\xi)\right)\right| d\xi 
	 =  I_1 + I_2.
	\end{aligned}
\end{equation*}
 where
\begin{equation*}
	\begin{aligned}
		I_1 &:= \int\limits_{B_{R_\tau(\delta)}} |\mathcal{F}{v}(\xi) -  \calC^*_{\tau,\delta} w(\xi)| d\xi, \\ 
		I_2 &:= \int\limits_{\mathbb{R}^d \setminus B_{R_\tau(\delta)}} |\mathcal{F}{v}(\xi) -  \calC^*_{\tau,\delta} w(\xi)| d\xi.
	\end{aligned}
\end{equation*}
%	By assumptions, we find that
%	\begin{equation*}
%		I_1 = \int\limits_{B_r} |\mathcal{F}{v}(\xi) - w(\xi)| d\xi \leq  \int\limits_{B_r} \delta\, d\xi = \frac{c   r^d}{d} \delta.
%	\end{equation*}
		Using  Corollary \ref{Corollary_C}, we get that
	\begin{equation*} 
	 	\begin{aligned}
	 	I_1 \leq   \int \limits_{B_{R_\tau(\delta)}} \left\|\mathcal{F}{v} - \calC^*_{\tau,\delta} w\right\|_{\calL^\infty(B_{R_\tau(\delta)})} d\xi 
	 	&\leq    \int\limits_{B_{R_\tau(\delta)}}    8 L_{\tau}(\delta)   N^{1-\alpha} \delta^{\alpha}\,  d \xi\\
	 	&= \dfrac{8 c(d)}{d}  N^{1-\alpha}  r^d   \left({L}_{\tau}(\delta)\right)^{d+1} 
		   \delta^{\alpha}.
	 	\end{aligned}
	 \end{equation*}
	 	 
Next,  since $v\in \calW^m(\mathbb{R}^d)$, we have that 
\begin{equation*}
	 |\xi|^m |\mathcal{F}{v}(\xi)| \leq (1+|\xi|^2)^{m/2} |\mathcal{F}{v}(\xi)| \leq  \|v\|_{\calW^m(\Reals^d)}.
\end{equation*}
Thus, we can bound
\begin{equation*}
	\begin{aligned}
	I_2&= \int\limits_{{\mathbb{R}^d \setminus B_{R_\tau(\delta)}}} |\calF v(\xi)| d\xi  \leq c(d) \int\limits_{R_\tau(\delta)}\limits^{+\infty} \frac{\|v\|_{\calW^m(\Reals^d)}}{t^{m-d-1}}dt \\ &=  
	\dfrac{c(d)}{m-d}  \|v\|_{\calW^m(\Reals^d)}   (R_\tau(\delta))^{-m+d} \\
	&=
	\dfrac{c(d)}{m-d}     \|v\|_{\calW^m(\Reals^d)}  	r^{-m+d} \left({L}_{\tau}(\delta)\right)^{-m+d}.
	\end{aligned}
\end{equation*}
Combining the above bounds for $I_1$ and $I_2$  completes the proof of  \ \eqref{eq_Theorem1}. 
%%%%%%%%%%%%%%%%%%%%%%%%%%%%%%%%%%%%%%%%%%%%%%%%%%%%%%%%%%%%%%%%%%%%%%%%%%%%%%%%%%%%%%%%%%%%%%%%%%%%%%%%%%%%
%%%%%%%%%%%%%%%%%%%%%%%%%%%%%%%%%%%%%%%%%%%%%%%%%%%%%%%%%%%%%%%%%%%%%%%%%%%%%%%%%%%%%%%%%%%%%%%%%%%%%%%%%%%%
%%%%%%%%%%%%%%%%%%%%%%%%%%%%%%%%%%%%%%%%%%%%%%%%%%%%%%%%%%%%%%%%%%%%%%%%%%%%%%%%%%%%%%%%%%%%%%%%%%%%%%%%%%%%
%%%%%%%%%%%%%%%%%%%%%%%%%%%%%%%%%%%%%%%%%%%%%%%%%%%%%%%%%%%%%%%%%%%%%%%%%%%%%%%%%%%%%%%%%%%%%%%%%%%%%%%%%%%%
%%%%%%%%%%%%%%%%%%%%%%%%%%%%%%%%%%%%%%%%%%%%%%%%%%%%%%%%%%%%%%%%%%%%%%%%%%%%%%%%%%%%%%%%%%%%%%%%%%%%%%%%%%%%

\subsection{Proof of  estimate (\ref{eq_Theorem2}) }\label{S:2.2}

The Parseval-Plancherel identity states that  
		\begin{equation}\label{Parseval}
		 \|u\|_{\calL^2(\mathbb{R}^d)} = (2\pi)^{\frac d 2}\|\mathcal{F} u \|_{\calL^2(\mathbb{R}^d)} = (2\pi)^{-\frac d 2}\|\mathcal{F}^{-1} u \|_{\calL^2(\mathbb{R}^d)}. 
		\end{equation}
Thus, we get that 
\begin{equation*}%\label{5.2}
	\begin{aligned}
	\|v - \mathcal{F}^{-1} \calC^*_{\tau,\delta} w\|_{\calH^s(\mathbb{R}^d)} = 
	(2\pi)^{\frac d 2} 
	\left\|(1+|\xi|^2)^{\frac{s}{2}}(\mathcal{F} v -  \calC^*_{\tau,\delta} w)\right\|_{\calL^{2}(\mathbb{R}^d)} 
	\leq (2\pi)^{\frac d 2} (\tilde{I}_1+\tilde{I}_2),
	\end{aligned} 
\end{equation*}
where
\begin{equation*}
	\begin{aligned}
		\tilde{I}_1 &:= \left(\int\limits_{B_{R_\tau(\delta)}} (1+|\xi|^2)^{s}|\mathcal{F}{v}(\xi) -
		\calC^*_{\tau,\delta} w(\xi)|^2 d\xi \right)^{1/2}, \\ 
		\tilde{I}_2 &:= \left(\int\limits_{\mathbb{R}^d \setminus B_{R_\tau(\delta)}} (1+|\xi|^2)^{s} |\mathcal{F}{v}(\xi) - \calC^*_{\tau,\delta} w(\xi)|^2 d\xi \right)^{1/2}.
	\end{aligned}
\end{equation*}
Using  Corollary \ref{Corollary_C}, we  get that
	 \begin{equation*}	
	 	\begin{aligned}
	 	\tilde{I}_1 &\leq   \left( \int \limits_{B_{R_\tau(\delta)}} 
	 	(1+|\xi|^2)^{s}\left\|\mathcal{F}{v} - \calC^*_{\tau,\delta}w\right\|^2_{\calL^\infty(B_{R_\tau(\delta)})} d\xi  \right)^{1/2} \\
	 	&\leq  8  N^{1-\alpha}   \left(c(d) \int\limits_{0}^{R_\tau(\delta)} (1+t^2)^{s} t^{d-1} dt   \right)^{1/2}       L_{\tau}(\delta) \delta^{\alpha}. 
	 %	\left( \int \limits_{B_{R(\tau,\delta)}\setminus B_r} 
	 	%(1+|\xi|^2)^{s} d\xi  \right)^{1/2}.
	 	\end{aligned}
	 \end{equation*}
Applying  \eqref{Parseval} and recalling  that  $v\in \calH^m(\mathbb{R}^d)$, we find that 
\begin{equation*}
	 \int\limits_{\Reals^d \setminus B_{R_\tau(\delta)}} (1+|\xi|^2)^{s} |\mathcal{F} v(\xi)|^2 d\xi
		\leq  \left\|\frac{(1+|\xi|^2)^{\frac{m}{2}} \mathcal{F} v }{(R_\tau(\delta))^{m-s}} \right\|_{L^2(\mathbb{R}^d \setminus B_{R(\tau,\delta)})}^2
		\leq \frac{(2\pi)^{-d} \|v\|_{\calH^m(\Reals^d)}^2}{(R_\tau(\delta))^{2(m-s)}}.
\end{equation*}
	Thus, we can bound
	\begin{equation*}
		\begin{aligned}
			\tilde{I}_2 &\leq  \left(\int\limits_{\mathbb{R}^d \setminus B_{R_\tau(\delta)}} (1+|\xi|^2)^{s} |\mathcal{F}{v}(\xi)|^2  d\xi \right)^{1/2}
			 \leq \left\|\frac{(1+|\xi|^2)^{\frac{m}{2}} \mathcal{F} v }{(R_\tau(\delta))^{m-s}} \right\|_{L^2(\mathbb{R}^d \setminus B_{R_\tau(\delta)})} \\
			&\leq \frac{(2\pi)^{-\frac d 2}  \|v\|_{\calH^m(\Reals^d)}}{(R_\tau(\delta))^{m-s}}
			=  (2\pi)^{- \frac d 2}  \|v\|_{\calH^m(\Reals^d)} r^{-m+s}\left({L}_{\tau}(\delta)\right)^{-m+s}.
		\end{aligned}
	\end{equation*}
	Combining the above bounds for $\tilde{I}_1$ and $\tilde{I}_2$ completes the proof of
	 \eqref{eq_Theorem2}.
%%%%%%%%%%%%%%%%%%%%%%%%%%%%%%%%%%%%%%%%%%%%%%%%%%%%%%%%%%%%%%%%%%%%%%%%%%%%%%%%%%%%%%%%%%%%%%%%%%%%%%%%%%%%

%%%%%%%%%%%%%%%%%%%%%%%%%%%%%%%%%%%%%%%%%%%%%%%%%%%%%%%%%%%%%%%%%%%%%%%%%%%%%%%%%%%%%%%%%%%%%%%%%%%%%%%%%%%%
%%%%%%%%%%%%%%%%%%%%%%%%%%%%%%%%%%%%%%%%%%%%%%%%%%%%%%%%%%%%%%%%%%%%%%%%%%%%%%%%%%%%%%%%%%%%%%%%%%%%%%%%%%%%
%%%%%%%%%%%%%%%%%%%%%%%%%%%%%%%%%%%%%%%%%%%%%%%%%%%%%%%%%%%%%%%%%%%%%%%%%%%%%%%%%%%%%%%%%%%%%%%%%%%%%%%%%%%%
%%%%%%%%%%%%%%%%%%%%%%%%%%%%%%%%%%%%%%%%%%%%%%%%%%%%%%%%%%%%%%%%%%%%%%%%%%%%%%%%%%%%%%%%%%%%%%%%%%%%%%%%%%%%
%%%%%%%%%%%%%%%%%%%%%%%%%%%%%%%%%%%%%%%%%%%%%%%%%%%%%%%%%%%%%%%%%%%%%%%%%%%%%%%%%%%%%%%%%%%%%%%%%%%%%%%%%%%%
%%%%%%%%%%%%%%%%%%%%%%%%%%%%%%%%%%%%%%%%%%%%%%%%%%%%%%%%%%%%%%%%%%%%%%%%%%%%%%%%%%%%%%%%%%%%%%%%%%%%%%%%%%%%
%%%%%%%%%%%%%%%%%%%%%%%%%%%%%%%%%%%%%%%%%%%%%%%%%%%%%%%%%%%%%%%%%%%%%%%%%%%%%%%%%%%%%%%%%%%%%%%%%%%%%%%%%%%%
%%%%%%%%%%%%%%%%%%%%%%%%%%%%%%%%%%%%%%%%%%%%%%%%%%%%%%%%%%%%%%%%%%%%%%%%%%%%%%%%%%%%%%%%%%%%%%%%%%%%%%%%%%%%

\section{Proof of  Lemma \ref{Lemma_C}}\label{S:Proof-Lemma}

%%%%%%%%%%%%%%%%%%%%%%%%%%%%%%%%%%%%%%%%%%%%%%%
%%%%%%%%%%%%%%%%%%%%%%%%%%%%%%%%%%%%%%%%%%%%%%%%%%%%%%%%%%%%%%%%%%%%%%%%%%%%%%%%%%%%%%%%%%%%%%%%%%%%%%%%%%%%

To prove  Lemma \ref{Lemma_C}, we need a bound for the error term in approximations of holomorphic functions by truncated series of Chebyshev polynomials stated in the following lemma.  For completeness purposes, we include a proof of this bound. 

\begin{Lemma}\label{Lemma_1D}
 Suppose that $f(z)$ is a holomorphic function in the ellipse 
 \begin{equation}\label{D_rho}
 	D(\rho) := \left\{\cos z \st z\in \Complexes  \text{ and }  |\Im z| < \ln \rho \right\} 
 \end{equation}
 for some $\rho >2$ and $\sup\limits_{z \in D(\rho)} |f(z)| \leq M_\rho < +\infty$ for some $M_\rho >0$. 
 Then, 
 \begin{equation*}
 	\left\|f - \sum \limits_{k=0} \limits^{n-1} b_k T_k\right\|_{L^{\infty}([-\rho',\rho'])} \leq  
 	2 M_\rho   \left(1 - \dfrac{2\rho'}{\rho}\right)^{-1} \left(\dfrac{2\rho'}{\rho}\right)^n, 
 \end{equation*}
 for any $n \in \mathbb{N}$ and any $\rho' \in [1,\rho/2)$, where $(T_k)_{k \in \mathbb{N}}$ are the Chebyshev polynomials and 
 \begin{equation}\label{def_b} 
 		b_k	 := 
 		\begin{cases}
  \displaystyle 		 \dfrac{1}{\pi}\int\limits_{-1}\limits^1 \frac{f(t)}{\sqrt{1-t^2}} dt, &\text{if $k=0$,}
 		\\
 		\displaystyle \dfrac{2}{\pi}\int\limits_{-1}\limits^1 \frac{f(t)T_k(t)}{\sqrt{1-t^2}}  dt,& \text{otherwise.} 
 		\end{cases}
 \end{equation}
\end{Lemma}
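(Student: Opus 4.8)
The plan is to reduce the statement to a geometric-series estimate for the Chebyshev coefficients $b_k$ via the Joukowski substitution, which turns the ellipse $D(\rho)$ into a circular annulus where Cauchy estimates are available.

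First, I would introduce $w = e^{iz}$, so that $\cos z = \tfrac12(w + w^{-1})$ and the condition $|\Im z| < \ln\rho$ becomes $1/\rho < |w| < \rho$. Hence $D(\rho)$ is exactly the image of the open annulus $A := \{w \st 1/\rho < |w| < \rho\}$ under the Joukowski map $w \mapsto \tfrac12(w+w^{-1})$, and the composition $F(w) := f\big(\tfrac12(w+w^{-1})\big)$ is holomorphic on $A$ with $\sup_A |F| \le M_\rho$. Because $\tfrac12(w+w^{-1})$ is invariant under $w\mapsto 1/w$, the Laurent coefficients $c_k$ of $F$ satisfy $c_k = c_{-k}$.

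Second, I would identify the $b_k$ of \eqref{def_b} with these Laurent coefficients. Substituting $t = \cos\theta$ (so that $\sqrt{1-t^2}=\sin\theta$ and $T_k(\cos\theta) = \cos k\theta$) in \eqref{def_b} turns $b_k$ into the Fourier cosine coefficients of the even function $\theta \mapsto f(\cos\theta) = F(e^{i\theta})$; comparing with $c_k = \tfrac{1}{2\pi}\int_{-\pi}^{\pi} F(e^{i\theta})e^{-ik\theta}\,d\theta$ gives $b_0 = c_0$ and $b_k = 2c_k$ for $k\ge 1$. A Cauchy estimate for the Laurent coefficients on the circle $|w| = s$ with $s \uparrow \rho$ then yields $|c_k| \le M_\rho\,\rho^{-|k|}$, hence $|b_k|\le 2M_\rho\,\rho^{-k}$ for all $k \ge 0$.

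Third, I would fix $t \in [-\rho',\rho']$ and choose its Joukowski preimage $u$ with $|u|\ge 1$, so that $t = \tfrac12(u+u^{-1})$ and $2T_k(t) = u^k + u^{-k}$. Solving $u + u^{-1} = 2t$ shows $|u| \le \rho' + \sqrt{\rho'^2 - 1} < 2\rho'$, and the hypothesis $\rho' < \rho/2$ gives $|u| < \rho$, so $u \in A$ and the Laurent series of $F$ converges at $u$. Evaluating it and using $c_k = c_{-k}$ produces the Chebyshev expansion $f(t) = F(u) = \sum_{k\ge 0} b_k T_k(t)$ pointwise on $[-\rho',\rho']$, which is precisely the step that needs $\rho' < \rho/2$. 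Subtracting the partial sum and bounding $|T_k(t)| = \tfrac12|u^k + u^{-k}| \le |u|^k \le (2\rho')^k$ termwise gives
\[
\left| f(t) - \sum_{k=0}^{n-1} b_k T_k(t)\right| \le \sum_{k=n}^{\infty} |b_k|\,(2\rho')^k \le 2M_\rho \sum_{k=n}^{\infty} \left(\frac{2\rho'}{\rho}\right)^k,
\]
and summing the geometric series (whose ratio $2\rho'/\rho < 1$) yields the claimed bound. The main point requiring care is this third step: verifying that the Joukowski preimage of every $t\in[-\rho',\rho']$ stays strictly inside the annulus of convergence, so that the Laurent series may legitimately be evaluated there and rearranged into the Chebyshev series — this, together with the termwise bound on $|T_k|$ outside $[-1,1]$, is where the exponent $n$ and the factor $(2\rho'/\rho)^n$ are produced.
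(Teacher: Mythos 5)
Your proof is correct and is essentially the paper's argument in a conformally equivalent picture: the paper substitutes $g(z)=f(\cos z)$ and shifts the Fourier-coefficient contour in the strip $|\Im z|<\ln\rho$ to get $|b_k|\le 2M_\rho\rho^{-k}$, which is exactly your Cauchy/Laurent estimate on the annulus after the further substitution $w=e^{iz}$, and both proofs then finish with the same bound $|T_k(t)|\le(2\rho')^k$ and the same geometric tail sum. Your treatment of the limit $s\uparrow\rho$ is in fact slightly more careful than the paper's contour shift onto the boundary line $\Im z=\ln\rho$.
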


\begin{proof} 
First of all, we note that the condition $\rho' \in [1,\rho/2)$ 
ensures that interval $[-\rho',\rho']$ (of the real axe) lies in the ellipse $D(\rho)$. Indeed, 
$$D(\rho) \cap \mathbb{R} = \left(-\dfrac{\rho+\rho^{-1}}{2},\dfrac{\rho+\rho^{-1}}{2}\right).$$ 
Note also that
	\begin{equation}\label{eq:z-cos}
			 |\Im \zeta| < \rho/2 \qquad  \text{for }  \zeta \in D(\rho).
	\end{equation}	

	 Let  $g(z) := f(\cos z)$. Note that
	$g(z)$ 
	is  an even $2\pi$-periodic holomorphic function in the stripe $|\Im z| < \ln \rho$ and, 
	for all $k \in \Naturals$, 
	\begin{equation*}
		 \int\limits_{0}\limits^{2\pi} e^{ik\varphi} g(\varphi) d\varphi  = \int\limits_{0}\limits^{2\pi} e^{-ik\varphi} g(\varphi) d\varphi
		=		2 \int\limits_{-1} \limits^1 \frac{f(t)T_k(t)}{\sqrt{1-t^2}} dt, % \ \ \ \ k \in \mathbb{N}.
	\end{equation*}
	Hence,  by the Cauchy integral theorem,  
	we get that
	\begin{equation}\label{eq:g}
		g(z) = \sum\limits_{k=0}\limits^{\infty} b_k \cos kz, \ \ \text{ for } 
	 	|\Im z| < \ln \rho,
	\end{equation}
	where
	\begin{equation*}\label{est_cn1}
		\begin{aligned}
		|b_k| = \left|\frac{1}{\pi} \int\limits_0\limits^{2\pi} e^{ik \varphi} g(\varphi) d\varphi  \right|
		= \left| \frac{1}{\pi}
		\int\limits_{0 + i\ln \rho }\limits^{2\pi + i\ln \rho} e^{ikz} g(z) dz  \right| \leq \\ 
		\leq \frac{1}{\pi}\int\limits_0\limits^{2\pi} e^{-k \ln\rho} |g(t)| dt  \leq  2 M_\rho \rho^{-k}, \ \ \    k \in \mathbb{N}.	
		\end{aligned}
	\end{equation*}
 Using \eqref{eq:g} and recalling that  $T_k(t):= \cos(k\operatorname{arccos}(t))$ for $|t|\leq 1$,  we get that 
\begin{equation*}\label{expansion_f}
		f(z) = \sum\limits_{k=0}\limits^{\infty}   b_k  T_k(z), \qquad
	 	z \in D(\rho).
	\end{equation*}
 Observe that 
if $|t| \leq 1$ then $|T_k(t)| \leq 1$, otherwise 
\begin{equation}\label{bound_T}
  \begin{aligned}
 	|T_k(t)| = |\cosh (k\operatorname{arccosh}(t))| = \dfrac{1}{2}|(t-\sqrt{t^2-1})^k + (t+\sqrt{t^2-1})^k|
 	\leq (2|t|)^k.
% 	\\ 
 %	&\leq \dfrac12 \left( \dfrac{1}{|x|+\sqrt{x^2-1}} + |x|+\sqrt{x^2-1}\right)^k
   \end{aligned}
\end{equation}
%	
%	
%\red{Combining the estimates above and using the well-known bound} \mi{give refs} 
%\begin{equation}\label{bound_T}
%\|T_k\|_{L^{\infty}[-\lambda;\lambda]} \leq (2\lambda)^k, \qquad \text{for any } \lambda\geq 1,
%\end{equation}
 Combining the estimates above, we get that, for any $t\in [-\rho',\rho']$ and  $n \in \mathbb{N}$,
\begin{equation*}\label{est_f_out}
	\begin{aligned}
	\left|f(t) - \sum\limits_{k=0}\limits^{n-1} b_k T_k(t)\right| &\leq 
	\sum\limits_{k=n}\limits^{\infty} |b_k T_k(t)| 
	\\&\leq 
	2 M_\rho \sum\limits_{k=n}\limits^{\infty}  \left(\dfrac{2\rho'}{\rho}\right)^k  =
2 M_\rho   \left(1 - \dfrac{2\rho'}{\rho}\right)^{-1} \left(\dfrac{2\rho'}{\rho}\right)^n.
	\end{aligned}
\end{equation*}
 This completes the proof of Lemma \ref{Lemma_1D}. 
\end{proof}

%%%
%%%%%%%%%%%%%%%%%%%%%%%%%%%%%%%%%%%%%%%%%%%%%%%%%%%%%%%%%%%%%%%%%%%%%%%%%%%%%%%%%%%%%%%%%%%%%%%%%%%%%%%%%%%%
%%%%%%%%%%%%%%%%%%%%%%%%%%%%%%%%%%%%%%%%%%%%%%%%%%%%%%%%%%%%%%%%%%%%%%%%%%%%%%%%%%%%%%%%%%%%%%%%%%%%%%%%%%%%

Now we are ready to prove  Lemma \ref{Lemma_C}.
For $\theta \in S^{d-1}$,  
consider functions $f_{r,\theta} : \mathbb{R} \rightarrow \mathbb{C}$ defined by
	\begin{equation*}\label{def_f_theta}
		f_{r,\theta} (s)  := \mathcal{F}v \left(sr\, \theta\right) =
		 \dfrac{1}{(2\pi)^d} \int\limits_{\mathbb{R}^d} e^{i sr \theta x } v(x) dx, \ \ s\in \mathbb{R}.
	\end{equation*}
	Provided  $Q_v(\lambda) < +\infty$,  we have that $f_{r,\theta}$  admits a holomorphic extension to
	 the ellipse  $D(\rho)$   defined by \eqref{D_rho} with $\rho:= 2\lambda/r$.	  Furthermore,  using \eqref{eq:z-cos}, we get that 
	   	\[
		|f_{r,\theta} (\zeta )|\leq   \dfrac{1}{(2\pi)^d} \int\limits_{\mathbb{R}^d}  e^{   r  |\Im\zeta| \cdot |x|   }| v(x)| dx     \leq   Q_v(\lambda), \qquad \text{  for $\zeta \in D(\rho)$.}
 	\]
% 	For the above, we used  the fact that
%	\begin{equation*}
%			\text{ $|\Im (\cos z)| \leq \rho/2$ \  for \  $|\Im  z| \leq \ln \rho$. }
%	\end{equation*}	
% 	
% 	
	Applying Lemma \ref{Lemma_1D} and taking into account that 
	$\rho = 2\lambda/r$ and  $r\leq R\leq \lambda/2$, we find  that   
%	and 
%	 estimate \eqref{bound_T} with	$\lambda := R/r$, we get that 
	\begin{equation*}%\label{f_theta_eq}
		\left\|f_{r,\theta} - \sum \limits_{k=0} \limits^{n-1} a_k(\theta) T_k\right\|_{\calL^{\infty}([-R/r,R/r])} \leq  
 	2 \,Q_v(\lambda)   \left(1 - \dfrac{R}{\lambda}\right)^{-1} \left(\dfrac{R}{\lambda}\right)^n.
	\end{equation*}
	%where, for $k\in \mathbb{N}$, the coefficients $b_k(\theta)$ denote  $b_k$ of (\ref{def_b}) for the functions $f_{r,\theta}$. 
					% definitions \eqref{def_C}, \eqref{def_b}, \eqref{def_f_theta}, estimate \eqref{f_theta_eq} and
			 It follows that
	\begin{equation}  \label{eq_Fw1}
			\left\|\mathcal{F}v - \mathcal{C}_{R,n}\left[\mathcal{F}v\right] \right\|_{\calL^{\infty}(B_R)} 
		\leq  4 \,Q_v(\lambda)   \left(\dfrac{R}{\lambda}\right)^{n}.
	\end{equation}
		We note  that 
	\begin{equation*}\label{linear_C}
			\mathcal{C}_{R,n}[w] - \mathcal{C}_{R,n}\left[\mathcal{F} v\right]  =
			\mathcal{C}_{R,n} \left[ w - \mathcal{F} v \right].
	\end{equation*}
%		 Using the following well-known bounds \mi{give refs}
%%\begin{equation*}
%%	\begin{aligned}
%%	T_k(x) = \cos(k \text{arccos}\,x), \ \ \	 x\in \mathbb{R},\ |x|\leq 1,\\
%%	T_k(x) = \frac{(x-\sqrt{x^2-1})^k + (x+\sqrt{x^2-1})^k}{2}, \ \ \	 x\in \mathbb{R},\ |x|>1,
%%\end{aligned}
%%\end{equation*}
%%we get that
%\begin{equation}\label{est_Tn}
%	\begin{aligned}
%	|T_k(x)| &\leq 2^{k} \lambda^k, \ \ \ \ x\in [-\lambda,\lambda],\\
%	|T_k(x)| &\leq 1,  \ \ \ \ x\in [-1,1].
%	\end{aligned}
%\end{equation}
Observe  that
\[
	\int\limits_{-r}\limits^r \frac{|T_k(t/r)|}{\sqrt{r^2 - t^2}} dt   
	\leq  	\int\limits_{-r}\limits^r \frac{dt}{\sqrt{r^2 - t^2}} = \pi.
\]
Recalling  the definition of $\mathcal{C}_{R,n}$	and using  the above two formulas and \eqref{bound_T},
%\eqref{def_C}, \eqref{linear_C}, \eqref{est_Tn}
% and taking into account $\lambda>1$, 
 we get that
\begin{equation*}%\label{eq_Fw2}
	\begin{aligned}
	 &\left\|	\mathcal{C}_{R,n}[w] - \mathcal{C}_{R,n}[\mathcal{F} v] \right\|_{\calL^{\infty}(B_R)} 
	 \\
	   & \qquad \qquad  \qquad  \leq \frac{2}{\pi}\sum\limits_{k=0}\limits^{n-1} \|T_k\|_{\calL^{\infty}([-R/r,R/r])} \|w - \mathcal{F}v\|_{\calL^{\infty}(B_r)} 
	     \int\limits_{-r}\limits^r \frac{|T_k(t/r)|}{\sqrt{r^2 - t^2}} dt \\
	    &\qquad  \qquad  \qquad \leq  2 \sum\limits_{k=0}\limits^{n-1}\left(\dfrac{2R}{r}\right)^k  \delta \leq  2 \left(\dfrac{2R}{r}\right)^n \delta.
	 \end{aligned}
\end{equation*}
This bound together with \eqref{eq_Fw1} implies  Lemma \ref{Lemma_C}.
%%%%%%%%%%%%%%%%%%%%%%%%%%%%%%%%%%%%%%%%%%%%%%%%%%%%%%%%%%%%%%%%%%%%%%%%%%%%%%%%%%%%%%%%%%%%%%%%%%%%%%%%%%%%
%%%%%%%%%%%%%%%%%%%%%%%%%%%%%%%%%%%%%%%%%%%%%%%%%%%%%%%%%%%%%%%%%%%%%%%%%%%%%%%%%%%%%%%%%%%%%%%%%%%%%%%%%

%\section*{Acknowledgements}
%The second author was partially supported by the Russian Federation Goverment grant No. 2010-220-01-077.

\end{document}